\newtheorem{lemma}{Lemma}[section]
\newtheorem{remark}[lemma]{Remark}
\newtheorem{theorem}[lemma]{Theorem}
\newtheorem{corollary}[lemma]{Corollary}
\newtheorem{definition}[lemma]{Definition}
\newtheorem{proposition}[lemma]{Proposition}
\newcommand{\as}{\ensuremath{\mathrm{As}}}
\newcommand{\com}{\ensuremath{\mathrm{Com}}}
\newcommand{\C}{\ensuremath{\mathscr{C}}}
\renewcommand{\P}{\ensuremath{\mathscr{P}}}
\newcommand{\SLoo}{\ensuremath{\mathcal{s}\mathscr{L}_\infty}}
\newcommand{\Ccog}{\ensuremath{\mathscr{C}\text{-}\mathsf{cog}}}
\newcommand{\Lalg}{\ensuremath{\mathcal{s}\mathscr{L}_\infty\text{-}\mathsf{alg}}}
\newcommand{\Palg}{\ensuremath{\mathscr{P}\text{-}\mathsf{alg}}}
\renewcommand{\bar}{\ensuremath{\mathrm{B}}}
\newcommand{\bara}{\ensuremath{\mathrm{B}_\alpha}}
\newcommand{\bari}{\ensuremath{\mathrm{B}_\iota}}
\newcommand{\cobar}{\ensuremath{\Omega}}
\newcommand{\cobara}{\ensuremath{\Omega_\alpha}}
\newcommand{\F}{\ensuremath{\mathcal{F}}}
\newcommand{\g}{\mathfrak{g}}
\newcommand{\h}{\mathfrak{h}}
\newcommand{\homa}{\ensuremath{\hom^\alpha}}
\newcommand{\homi}{\ensuremath{\hom^\iota}}
\newcommand{\id}{\operatorname{id}}
\renewcommand{\k}{\ensuremath{\mathbb{K}}}
\newcommand{\M}{\rotatebox[origin=c]{180}{$\mathsf{M}$}}
\newcommand{\MC}{\ensuremath{\mathrm{MC}}}
\newcommand{\op}{\ensuremath{\mathsf{Op}}}
\newcommand{\proj}{\ensuremath{\mathrm{proj}}}
\newcommand{\pr}{\ensuremath{\mathrm{pr}}}
\renewcommand{\S}{\ensuremath{\mathbb{S}}}
\newcommand{\Tw}{\ensuremath{\mathrm{Tw}}}
\DeclareMathOperator{\Ho}{Ho}
\author{Daniel Robert-Nicoud and Felix Wierstra}
\date{}
\title{Convolution algebras and the deformation theory of infinity-morphisms}
\address{Laboratoire Analyse, G\'eom\'etrie et Applications, Universit\'e Paris 13, Sorbonne Paris Cit\'e, 99 Avenue Jean Baptiste Cl\'ement, 93430 Villetaneuse, France}
\email{\href{mailto:robert-nicoud@math.univ-paris13.fr}{robert-nicoud@math.univ-paris13.fr}}
\address{Faculty of Mathematics and Physics, Charles University, Sokolovsk\'a 49/83, 186 75 Praha 8, Czech Republic}
\email{\href{mailto:felix.wierstra@gmail.com}{felix.wierstra@gmail.com}}
\subjclass[2010]{Primary 18D50; Secondary 08C05, 18G55}
\keywords{Homotopical algebra, convolution algebra, infinity-morphisms}
\thanks{The first author was supported by grants from R\'egion Ile-de-France, and the grant ANR-14-CE25-0008-01 project SAT. The second author acknowledges the financial support from the grant GA CR  No. P201/12/G028.}
\begin{document}
	
\begin{abstract}
	Given a coalgebra $C$ over a cooperad, and an algebra $A$ over an operad, it is often possible to define a natural homotopy Lie algebra structure on $\hom(C,A)$, the space of linear maps between them, called the convolution algebra of $C$ and $A$. In the present article, we use convolution algebras to define the deformation complex for $\infty$-morphisms of algebras over operads and coalgebras over cooperads. We also complete the study of the compatibility between convolution algebras and $\infty$-morphisms of algebras and coalgebras. We prove that the convolution algebra bifunctor can be extended to a bifunctor that accepts $\infty$-morphisms in both slots and which is well defined up to homotopy, and we generalize and take a new point of view on some other already known results. This paper concludes a series of works by the two authors dealing with the investigation of convolution algebras which were defined in \cite{wie16}, and \cite{rn17}, and then further studied and applied to rational homotopy theory in \cite{rnw17}.
\end{abstract}

\maketitle

\setcounter{tocdepth}{1}

\tableofcontents

\section{Introduction}

Suppose that we are given a type of algebras --- such as associative, commutative or Lie algebras, but also more elaborate ones, such as homotopy Lie or hypercommutative algebras --- and a type of coalgebras --- coassociative, cocommutative, and so on --- encoded respectively by an operad $\P$ and a cooperad $\C$. Suppose that these types of algebras and coalgebras are related by an operadic twisting morphism $\alpha:\C\to\P$. Some interesting examples of these include the universal twisting morphisms associated to the operadic bar and cobar constructions, and the twisting morphisms given by Koszul duality. Then, given a $\C$-coalgebra $C$ and a $\P$-algebra $A$, one can equip the chain complex of linear maps $\hom(C,A)$ with the structure of a (shifted) homotopy Lie algebra (usually referred to as $\SLoo$-algebras) in a canonical way. We denote this algebra by $\homa(C,A)$ and call it the \emph{convolution algebra} of $C$ and $A$.

\medskip

These algebras have already found various applications. They helped to construct a ``universal Maurer--Cartan element'' in \cite{rn17cosimplicial}, they are used to construct complete rational invariants of maps between topological spaces in \cite{wie16}, and they were applied to the construction of rational models for mapping spaces in \cite{rnw17}.

\medskip

The first of the two main results of the present paper is that one can use convolution algebras to define the correct deformation complex for $\infty$-morphisms between $\P$-algebras, as well as $\infty$-morphisms between conilpotent $\C$-coalgebras. Namely, given two $\P$-algebras, resp. conilpotent $\C$-coalgebras, we define a shifted homotopy Lie algebra whose Maurer--Cartan elements are in natural bijection with the $\infty$-morphisms between the algebras, resp. coalgebras, and which is such that two Maurer--Cartan elements are gauge equivalent if, and only if the corresponding $\infty$-morphisms are homotopic. We also relate the $\infty$-groupoid associated to the deformation complex with the mapping space in the $\infty$-category of algebras. Some partial results in this direction were already given e.g. in \cite{dol07}, \cite{dhr15}, and \cite[Sect. 7.1]{rn17}.

\medskip

Convolution algebras have been proven to behave well with respect to the tools of homotopical algebra. For example \cite[Thm. 5.1]{rn17}, they are compatible with the homotopy transfer theorem, see e.g. \cite[Sect. 10.3]{lodayvallette}. They are also compatible with a generalized notion of morphisms, called $\infty$-morphisms, in the sense that the bifunctor $\homa(-,-)$ can be extended to a bifunctor accepting $\infty$-morphisms in either one of its slots. This was proven in \cite[Prop. 4.4]{rn17} for a special case, and in full generality in \cite[Sect. 5.2]{rnw17}. Unfortunately, in \emph{op. cit.} the authors were also able to prove that one cannot perform the next natural step and extend the two bifunctors to a bifunctor accepting $\infty$-morphisms in both slots. The counterexample consists in an $\infty$-morphism $\Phi$ of $\C$-coalgebras and an $\infty$-morphism $\Psi$ of $\P$-algebras such that the two composites
\begin{equation}\label{eq:two compositions}
\homa(\Phi,1)\homa(1,\Psi)\qquad\text{and}\qquad\homa(1,\Psi)\homa(\Phi,1)
\end{equation}
are not equal, which tells us that a common extension to a new bifunctor is impossible.

\medskip

The second main result of the present article is that, assuming that the twisting morphism $\alpha$ is Koszul, these two composites are homotopic as $\infty$-morphisms of shifted homotopy Lie algebras. In particular, it is possible to extend the bifunctor $\homa(-,-)$ to accept $\infty$-morphisms in both slots if one accepts to work only up to homotopy.

\medskip

The content of this article is as follows. \cref{sect:recollections and deformation complex} begins with a short recollection of the less classical background notions we will use: those of convolution algebras and $\infty$-morphisms relative to a twisting morphism. This is followed by giving an interpretation of the Maurer--Cartan elements of a convolution algebra in terms of usual morphisms of algebras and coalgebras, and showing that two such Maurer--Cartan elements are gauge equivalent if, and only if the associated morphisms are homotopic. This is \cref{thm:MC and gauges of convolution homotopy algebras}, and it motivates the construction of a deformation complex for $\infty$-morphisms between algebras or coalgebras using convolution algebras.

\medskip

\cref{sect:main theorem and compatibility with oo-morphisms} contains most of the new results of the present article. The main result of the section is \cref{thm:fundamental thm of convolution algebras}, which describes a morphism of $\SLoo$-algebras between certain convolution algebras, and which gives us all the tools we need to study the compatibility of convolution algebras with $\infty$-morphisms. The rest of the section is mostly composed by consequences of this main theorem, and reaches its culmination with \cref{thm:compositions are homotopic}, which tells us that, even though we cannot extend the bifunctor $\homa(-,-)$ to a bifunctor taking $\infty$-morphisms in both its slots, we can do so if we pass to the homotopy categories provided the twisting morphism $\alpha$ is Koszul. The precise statement is that the two compositions described in (\ref{eq:two compositions}) are homotopic.

\medskip

Throughout the text, we postpone various technical proofs in order to improve readability. These are collected in \cref{sect:proofs}. This section also contains a result of independent interest. This is \cref{thm:comparison of deformation complexes}, which compares the deformation complex described at the end of \cref{sect:recollections and deformation complex} with another natural construction, proving that they contain exactly the same information. This last result also relates the Maurer--Cartan $\infty$-groupoid of the deformation complex with the $\infty$-categorical mapping space for algebras over an operad.

\medskip

We conclude the paper with \cref{appendix:counterexample}, where we give an explicit counterexample to the conclusion of \cref{thm:compositions are homotopic} if we remove the assumption that the twisting morphism is Koszul.

\medskip

This paper concludes a series of articles by the two authors dealing with the investigation of convolution algebras which started with \cite{wie16}, and \cite{rn17}, and then continued jointly with \cite{rnw17}.

\subsection*{Acknowledgements}

Both authors are grateful to Bruno Vallette and Alexander Berglund for their comments, advice, and constant support.

\subsection*{Notations and conventions}

We will use essentially the same notations and conventions as in \cite{rnw17}. By transitivity, we will follow the notations of the book \cite{lodayvallette} as closely as possible when talking about operads.

\medskip

We work over a field $\k$ of characteristic $0$, and over the category of chain complexes.  The dual of a chain complex will again be seen as a chain complex. All operads and cooperads in this paper are implicitly assumed to be \emph{reduced}, meaning that they are zero in arity $0$, and spanned by the identity in arity $1$. Similarly, all coalgebras and cooperads are assumed to be conilpotent.

\medskip

When talking about the homotopy theory of algebras, we always place ourselves in the Hinich model structure \cite[Thm. 4.1.1]{Hin97homological}, where the fibrations are the surjections, and the weak equivalences are the maps of algebras that are quasi-isomorphisms of the underlying chain complexes. When considering coalgebras, the model structure we will use is the one defined in \cite{dch16} generalizing \cite[Sect. 2.1]{val14}, and depends on the specific twisting morphism we are working with. If $\alpha:\C\to\P$ is a twisting morphism, the associated model structure on $\C$-coalgebras has the injections as cofibrations, and the fibrations and weak equivalences are created by the cobar functor $\cobara$. This means that a morphism of coalgebras $f$ is a fibration, resp. a weak equivalence, if, and only if $\cobara f$ is surjective, resp. a quasi-isomorphism. All Koszul morphisms induce the same model structure by \cite[Prop. 32]{legrignou16}. In the Koszul case, the class of weak equivalences is the closure of the class of filtered quasi-isomorphisms of coalgebras under the 2-out-of-3 property, see \cite[Thm. 4.9]{rn18}.

\section{\texorpdfstring{$\infty$}{Infinity}-morphisms and convolution algebras}\label{sect:recollections and deformation complex}

Since this paper is a follow-up of the article \cite{rnw17}, we will keep the recollections to a minimum and we refer the reader to \emph{op. cit.} for any need of reminders on the topic of convolution algebras or the theory surrounding them. We give nonetheless a small list of definitions and basic facts that we will need throughout the paper, and give an upgraded version of \cite[Thm. 7.1]{wie16}, see \cref{thm:MC and gauges of convolution homotopy algebras}.

\subsection{\texorpdfstring{$\infty$}{Infinity}-morphisms relative to a twisting morphism}

The notions of $\infty$-morphisms of algebras and coalgebras relative to a twisting morphism are defined as follows.

\begin{definition}
	Let $\C$ be a cooperad, let $\P$ be an operad, and let $\alpha:\C\to\P$ be a twisting morphism.\begin{enumerate}
		\item Let $A,A'$ be two $\P$-algebras. An $\infty$-morphism $\Psi$ of $\P$-algebras relative to $\alpha$ --- or and $\infty_\alpha$-morphism --- from $A$ to $A'$ is a morphism
		\[
		\Psi:\bara A\longrightarrow\bara A'
		\]
		of $\C$-coalgebras. We also write $\Psi:A\rightsquigarrow A'$.
		\item Let $C',C$ be two $\C$-coalgebras. An $\infty$-morphism $\Phi$ of $\C$-coalgebras relative to $\alpha$ --- or and $\infty_\alpha$-morphism --- from $C'$ to $C$ is a morphism
		\[
		\Psi:\cobara C'\longrightarrow\cobara C
		\]
		of $\P$-algebras. We also write $\Phi:C'\rightsquigarrow C$.
	\end{enumerate}
\end{definition}

These notions of $\infty$-morphisms relative to a twisting morphism were studied in \cite[Sect. 3]{rnw17}.

\medskip

We know that for any twisting morphism $\alpha:\C\to\P$ the relative bar construction $\bara$ preserves fibrations, and dually the relative cobar construction $\cobara$ preserves cofibrations, as they form a Quillen pair (see \cite[Thm. 3.11(1)]{dch16}, and \cite[Thm. 2.9]{val14} for the Koszul case). Therefore, any coalgebra of the form $\bara A$ is fibrant, and any algebra of the form $\cobara C$ is cofibrant. We also know that all $\P$-algebras are fibrant, and that all $\C$-coalgebras are cofibrant. Therefore, we can see $\infty_\alpha$-morphisms, both of algebras and coalgebras, as morphisms between bifibrant objects. In particular, the homotopy relation is an equivalence relation for them.

\begin{definition}
	Let $\alpha:\C\to\P$ be a twisting morphism.
	\begin{enumerate}
		\item Two $\infty_\alpha$-morphisms $A\rightsquigarrow A'$ of $\P$-algebras are \emph{homotopic} (as $\infty_\alpha$-morphisms) if they are homotopic seen as morphisms of $\C$-coalgebras $\bara A\to\bara A'$.
		\item Dually, two $\infty_\alpha$-morphisms $C'\rightsquigarrow C$ of $\C$-coalgebras are \emph{homotopic} (as $\infty_\alpha$-morphisms) if they are homotopic seen as morphisms of $\P$-algebras $\cobara C'\to\cobara C$.
	\end{enumerate}
\end{definition}

One should also compare this notion of homotopy with the results of the article \cite{dhr15}. 

\subsection{Convolution algebras}

The main subject of interest of this article are convolution algebras and their homotopical properties. We give a short reminder of how these objects appear. Recall that, given a cooperad $\C$ and an operad $\P$, there is a natural operad structure on $\hom(\C,\P)$, called the convolution operad. It was introduced in \cite[Sect. 1]{bm03}, see also \cite[Sect. 6.4.1]{lodayvallette}. We denote by $\SLoo\coloneqq\cobar\com^\vee$ the operad encoding shifted homotopy Lie algebras, see e.g. \cite[Sect. 2.7]{rnw17}.

\begin{theorem}[{\cite[Sect. 7]{wie16}}]
	Let $\C$ be a cooperad, and let $\P$ be an operad. There is a natural, canonical bijection
	\[
	\Tw(\C,\P)\cong\hom_{\op}(\SLoo,\hom(\C,\P))
	\]
	between the set of twisting morphisms from $\C$ to $\P$ and the set of morphisms of operads from the operad $\SLoo$ encoding shifted homotopy Lie algebras to the convolution operad $\hom(\C,\P)$.
\end{theorem}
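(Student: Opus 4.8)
The plan is to exhibit the bijection explicitly by unwinding the definition of a twisting morphism and the bar construction $\bar\com^\vee = \SLoo$ on the cooperad side. Recall that a morphism of operads $\SLoo = \cobar\com^\vee \to \hom(\C,\P)$ is, by the universal property of the cobar construction on a cooperad, the same datum as a twisting morphism of operads $\com^\vee \to \hom(\C,\P)$, i.e. a degree $-1$ map $\varphi : \com^\vee \to \hom(\C,\P)$ satisfying the Maurer--Cartan equation $\partial \varphi + \varphi \star \varphi = 0$ in the convolution pre-Lie (operad) structure on $\hom(\com^\vee, \hom(\C,\P))$. The first step is therefore to record this standard reduction and to make the differentials and the pre-Lie product completely explicit.

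Next I would identify $\hom(\com^\vee, \hom(\C,\P))$ with $\hom(\com^\vee \otimes_{\mathrm H} \C, \P)$, or more precisely use the hom-tensor adjunction together with the fact that $\com^\vee$ is cofree-ish in the relevant sense, to rewrite a degree $-1$ equivariant collection $\com^\vee \to \hom(\C,\P)$ as a degree $-1$ map from (a suitable twisted/Hadamard composite of) $\C$ to $\P$. The key computation is to check that under this identification the Maurer--Cartan equation for $\varphi$ in the convolution operad on $\hom(\C,\P)$ corresponds \emph{exactly} to the Maurer--Cartan equation $\partial\alpha + \alpha\star\alpha = 0$ defining an operadic twisting morphism $\alpha : \C \to \P$ in $\hom(\C,\P)$ --- here one uses that the cooperad $\com^\vee$ has its cobar differential built from the cocomposition, and that the convolution operad's infinitesimal composition $\star$ is assembled from the cocomposition of $\C$ and the composition of $\P$ in a matching way, so that the ``shifted Lie'' combinatorics of $\com^\vee$ precisely reproduces the Lie bracket $[\alpha,\alpha]$. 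I expect both constructions (from $\alpha$ to the operad map and back) to be visibly inverse to each other once the dictionary is set up, and both are manifestly natural in $\C$ and $\P$.

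The main obstacle, and the place where care is needed, is the bookkeeping of signs, shifts, and arity: $\SLoo$ encodes \emph{shifted} homotopy Lie algebras, so the identification $\SLoo = \cobar\com^\vee$ already carries an operadic suspension, and one must verify that this suspension is exactly the degree shift that turns a degree $0$ operad map $\SLoo \to \hom(\C,\P)$ into a degree $-1$ twisting morphism $\C \to \P$ (rather than, say, forcing an unwanted extra shift or a sign discrepancy in the Maurer--Cartan equation). Concretely, the generator of $\SLoo$ in arity $n$ has degree $n-2$, and one checks that evaluating an operad map on these generators produces precisely the arity-$n$, degree $-1$ components of a map $\C \to \P$ with the correct symmetry; the quadratic cobar differential on $\SLoo$ then matches the bracket term. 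Since $\com^\vee$ is a quadratic (Koszul) cooperad and $\SLoo$ is its cobar, this is a clean but sign-sensitive verification, and I would do it generator by generator. Given these identifications the naturality and bijectivity are formal, completing the proof.
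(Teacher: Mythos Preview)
The paper does not actually prove this theorem; it is quoted as a recollection from \cite{wie16}, and only the explicit bijection is recorded afterwards: a twisting morphism $\alpha$ is sent to the operad map $\M_\alpha$ determined on generators by $\M_\alpha(\mu_n^\vee)=\alpha(n)$. So there is no proof in the paper to compare against, only this explicit description.

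Your approach is correct and is exactly what underlies that description. The first reduction---using the universal property of the cobar construction to identify $\hom_{\op}(\cobar\com^\vee,\hom(\C,\P))$ with $\Tw(\com^\vee,\hom(\C,\P))$---is the standard ``Rosetta stone'' and is precisely why the operad map is determined by its values on the $\mu_n^\vee$. Your second step, however, is phrased more elaborately than necessary: there is no need for a hom--tensor adjunction or a Hadamard product. The point is simply that $\com^\vee(n)\cong\k$ with trivial $\S_n$-action, so a degree $-1$ map of $\S$-modules $\com^\vee\to\hom(\C,\P)$ is the same as a family of $\S_n$-invariant degree $-1$ elements of $\hom(\C(n),\P(n))$, i.e.\ a degree $-1$ map of $\S$-modules $\C\to\P$. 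Under this dictionary the convolution product $\varphi\star\varphi$ computed in $\hom(\com^\vee,\hom(\C,\P))$ becomes exactly $\alpha\star\alpha$ in $\hom(\C,\P)$, so the two Maurer--Cartan equations match.

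One small correction: you write that the arity-$n$ generator of $\SLoo$ has degree $n-2$. That is the unshifted convention. Here $\SLoo=\cobar\com^\vee$ encodes \emph{shifted} homotopy Lie algebras, so the generators $s^{-1}\mu_n^\vee$ all sit in degree $-1$, independently of $n$; this is what makes the correspondence with a single degree $-1$ map $\alpha:\C\to\P$ work cleanly without any further arity-dependent shift.
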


This bijection is explicitly given by sending a twisting morphism $\alpha:\C\to\P$ to
\[
\M_\alpha:\SLoo\longrightarrow\hom(\C,\P)
\]
defined by $\M_\alpha(\mu_n^\vee) = \alpha(n):\C(n)\to\P(n)$. This assignment is also compatible with morphisms of operads. We refer the reader to \cite[Thm. 4.1]{rnw17} for the details.

\medskip

Let $\alpha:\C\to\P$ be a twisting morphism, let $C$ be a $\C$-coalgebra, and let $A$ be a $\P$-algebra. Then $\hom(C,A)$ is naturally a $\hom(\C,\P)$-algebra, so that we can pull its structure back along $\M_\alpha$ to get a $\SLoo$-algebra, which we denote by $\homa(C,A)$ and call the \emph{convolution algebra} of $C$ and $A$. Explicitly, if $C$ is a $\C$-coalgebra and $A$ is a $\P$-algebra, and denoting $\Delta_C:C\to\C(C)$ and $\gamma_A:\P(A)\to A$ the structure maps of $C$ and $A$ respectively, then the $\SLoo$-algebra structure of $\homa(C,A)$ is given by
\[
\ell_n(f_1,\ldots,f_n) = \gamma_A(\alpha\otimes F)^\S\Delta_C^n\ ,
\]
where $\Delta_C^n$ is the part of $\Delta_C$ landing in $(\C(n)\otimes C^{\otimes n})^{\S_n}$, and where
\[
(\alpha\otimes F)^\S\coloneqq\sum_{\sigma\in\S_n}(-1)^{\sigma(F)}\alpha\otimes f_{\sigma(1)}\otimes\cdots\otimes f_{\sigma(n)}
\]
for $f_1,\ldots,f_n\in\hom(C,A)$ and $\sigma(F)$ is the Koszul sign coming from switching around the $f_i$. The map $(\alpha\otimes F)^\S$ maps from invariants to invariants. Notice that there is an implicit identification of invariants with coinvariants before composing in $A$.

\medskip

The operation sending $(C,A)$ to $\homa(C,A)$ is compatible with morphisms of $\C$-coalgebras in the first slot, and with morphisms of $\P$-algebras in the second slot. Therefore, we obtain a bifunctor
\[
\homa:(\Ccog)^\mathrm{op}\times\Palg\longrightarrow\SLoo\text{-}\mathsf{alg}\ ,
\]
which is given by $\homa(C,A)$ on objects. Here, $\Ccog$ denotes the category of conilpotent $\C$-coalgebras, and $\Palg$ denotes the category of $\P$-algebras, both with strict morphisms.

\subsection{Maurer--Cartan elements and the deformation complex for \texorpdfstring{$\infty$}{infinity}-morphisms}

Given an $\SLoo$-al\-ge\-bra, it is natural --- from a deformation theoretical point of view --- to ask what its Maurer--Cartan elements and gauge relations are. In the case of convolution algebras, we can give a clean and complete answer.

\begin{theorem}\label{thm:MC and gauges of convolution homotopy algebras}
	Let $\alpha:\C\to\P$ be a twisting morphism, let $C$ be a $\C$-algebra, and let $A$ be a $\P$-algebra. Then there are natural bijections
	\[
	\hom_{\Ccog}(C,\bara A)\cong\MC(\homa(C,A))\cong\hom_{\Palg}(\cobara C,A)\ .
	\]
 Moreover,
	\begin{enumerate}
		\item\label{pt:1 MC and gauges} two morphisms of $\C$-coalgebras $C\to\bara A$ are homotopic if and only if the respective Maurer--Cartan elements are gauge equivalent, and
		\item\label{pt:2 MC and gauges} two morphisms of $\P$-algebras $\cobara C\to A$ are homotopic if and only if the respective Maurer--Cartan elements are gauge equivalent.
	\end{enumerate}
\end{theorem}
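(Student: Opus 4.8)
The strategy is to establish the chain of bijections first, and then treat the two homotopy statements as refinements of it. The key observation is the classical adjunction between bar and cobar constructions relative to a twisting morphism: for a $\C$-coalgebra $C$ and a $\P$-algebra $A$ one has
\[
\hom_{\Ccog}(C,\bara A)\cong\Tw_\alpha(C,A)\cong\hom_{\Palg}(\cobara C,A),
\]
where $\Tw_\alpha(C,A)$ denotes the set of twisting morphisms (i.e.\ solutions of the relevant Maurer--Cartan--type equation) from $C$ to $A$ relative to $\alpha$; this is \cite[Sect. 11.2]{lodayvallette} in the bar--cobar situation and its relative analogue. So the whole content of the first claim is to identify this set of twisting morphisms with $\MC(\homa(C,A))$. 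First I would spell out the Maurer--Cartan equation in the $\SLoo$-algebra $\homa(C,A)$: an element $f\in\hom(C,A)$ of the appropriate degree is Maurer--Cartan iff $\partial f+\sum_{n\ge 2}\frac{1}{n!}\ell_n(f,\ldots,f)=0$. Plugging in the explicit formula $\ell_n(f,\ldots,f)=\gamma_A(\alpha\otimes f^{\otimes n})^\S\Delta_C^n$ and using that all the $f$'s are equal (so the symmetrization just produces the coinvariant quotient and absorbs the $1/n!$), one sees that the equation becomes exactly $\partial f + \gamma_A\circ(\alpha\star f)\circ\Delta_C = 0$ in the convolution pre-Lie/associative sense — that is, precisely the equation defining a twisting morphism relative to $\alpha$. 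This is a direct unwinding of definitions, so no real obstacle here, only bookkeeping of signs and the invariants/coinvariants identification already flagged in the text.

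For part \eqref{pt:1 MC and gauges} and part \eqref{pt:2 MC and gauges}, the plan is to recall that in any $\SLoo$-algebra two Maurer--Cartan elements are gauge equivalent iff they lie in the same connected component of the Maurer--Cartan $\infty$-groupoid, and that gauge equivalence can be realized by a Maurer--Cartan element of $\homa(C,A)\otimes\k[t,dt]$ (a Maurer--Cartan homotopy), since we work over a characteristic-zero field; this is the standard Getzler/Hinich picture, and for the convolution algebra specifically it is the content of \cite[Thm. 7.1]{wie16} that this excerpt says it is upgrading. On the other side, since both $\bara A$ and $A$ are bifibrant (as noted in the excerpt, $\bara A$ is fibrant because it is a relative bar construction and all coalgebras are cofibrant; $A$ is fibrant and $\cobara C$ is cofibrant), two morphisms $C\to\bara A$, resp.\ $\cobara C\to A$, are homotopic iff they are connected by a right homotopy, i.e.\ a morphism into a path object. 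The natural path object here is built from the polynomial de Rham functor: $\bara(A)^{\k[t,dt]}$, resp.\ $A^{\k[t,dt]}$, where we use that $(-)\otimes\k[t,dt]$ is compatible with the relevant functors. The core of the argument is then to check that under the bijection of the first part, a Maurer--Cartan element of $\homa(C,A)\otimes\k[t,dt]\cong\homa(C,A\otimes\k[t,dt])$ corresponds to a morphism $\cobara C\to A\otimes\k[t,dt]$ (and dually $C\to\bara(A\otimes\k[t,dt])=\bara(A)$ twisted by the path object structure), and that the two endpoint evaluations $t=0,1$ match on both sides. Concretely: apply the first part's bijection with $A$ replaced by the path object $A\otimes\k[t,dt]$, and observe that it is natural, hence commutes with the two augmentations $\k[t,dt]\to\k$. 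This gives a bijection between Maurer--Cartan homotopies and left/right homotopies respecting endpoints, which is exactly the statement.

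The main obstacle I anticipate is the coalgebra side, point \eqref{pt:1 MC and gauges}: one must be careful that the model structure on $\C$-coalgebras depends on $\alpha$, that "homotopic morphisms of coalgebras" is interpreted via this model structure, and that the path object one chooses for $\bara A$ is the right one — i.e.\ that $\bara(A)\otimes\k[t,dt]$, or rather $\bara(A\otimes\k[t,dt])$, genuinely is a path object for $\bara A$ in the $\alpha$-model structure, which requires knowing that $\bara$ sends the interval/path object on $A$ to a path object on $\bara A$ (a consequence of $\bara$ being a right Quillen functor and of the explicit nature of the polynomial path object). Once this compatibility is in place, the coalgebra statement follows from the algebra statement by the same naturality argument, since the whole diagram of bijections is natural in $A$. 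The remaining steps — translating "gauge equivalent" into "connected by a Maurer--Cartan element over $\k[t,dt]$", and "homotopic" into "connected by a path-object homotopy" — are standard and can be cited rather than re-proved.
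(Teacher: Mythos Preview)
Your overall architecture matches the paper's: cite the bar--cobar adjunction for the bijections, use $A\otimes\Omega_1$ as a path object for $A$, use that $\bara$ is right Quillen to get $\bara(A\otimes\Omega_1)$ as a path object for $\bara A$, and then apply the first bijection with $A$ replaced by $A\otimes\Omega_1$. All of that is correct and is exactly what the paper does.

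The genuine gap is the step you flagged only as bookkeeping: the claimed isomorphism
\[
\homa(C,A)\otimes\k[t,dt]\;\cong\;\homa(C,A\otimes\k[t,dt]).
\]
This is \emph{false} in general: $\k[t,dt]=\Omega_1$ is infinite-dimensional, so the canonical map $\Theta\colon \phi\otimes\omega\mapsto(c\mapsto\phi(c)\otimes\omega)$ is only an injection unless $C$ is finite-dimensional. The paper says this explicitly (``One would like to go the easy way\ldots however, since $\Omega_\bullet$ is infinite dimensional this is not true unless $C$ is finite dimensional''). Concretely, your argument gives one direction for free --- a gauge homotopy in $\homa(C,A)\otimes\Omega_1$ pushes forward along $\Theta$ to a Maurer--Cartan element of $\homa(C,A\otimes\Omega_1)$, hence to a right homotopy $\cobara C\to A\otimes\Omega_1$ --- but the converse requires lifting a Maurer--Cartan element of $\homa(C,A\otimes\Omega_1)$ back through $\Theta$, and there is no reason such a lift exists on the nose.

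What the paper actually proves to close this gap is a separate theorem (\cref{thm:comparison of deformation complexes}): $\MC_\bullet(\homa(C,A))\simeq\MC(\homa(C,A\otimes\Omega_\bullet))$ is a \emph{homotopy equivalence} of simplicial sets, not an isomorphism. The proof is not formal: it uses Dupont's contraction $\Omega_\bullet\to C_\bullet$ to replace the polynomial forms by the finite-dimensional cellular cochains, applies the homotopy transfer theorem on both sides, checks that the two transferred structures on $\hom(C,A)\otimes C_\bullet\cong\hom(C,A\otimes C_\bullet)$ agree, and invokes a Dolgushev--Rogers--type argument to control the Maurer--Cartan simplicial sets through the filtration. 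Only after this is established does the $\pi_0$ statement you need follow. So your plan is missing one substantial ingredient, and it is precisely the ingredient the paper isolates as the real content of the proof.
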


The proof of this result is postponed to \cref{subsect:proof of MC and gauges of convolution homotopy algebras}.

\begin{remark}
	The Maurer--Cartan equation in the result above is well defined, since all $\C$-coalgebras are supposed to be conilpotent.
\end{remark}

\begin{remark}\label{rem:prior versions of MC of convolution algebras}
	The characterization of the Maurer--Cartan set of convolution algebras was initially done in \cite[Thm. 7.1]{wie16}, where point (\ref{pt:1 MC and gauges}) is also stated, and in \cite[Thm. 6.3]{rn17}. A special case of this result can also be found in \cite[Thm. 1]{dp16}.
\end{remark}

We can use \cref{thm:MC and gauges of convolution homotopy algebras} to solve the problem of giving the correct deformation complex for both $\infty$-morphisms between $\P$-algebras, and $\infty$-morphisms between $\C$-coalgebras. The problem of defining such a deformation complex was mentioned by M. Kontsevich in his 2017 talk at S\'eminaire Bourbaki \cite{kontsevich17}. A first approach to its solution was given by \cite[Thm. 7.1]{rn17}.

\begin{definition}
	Let $\alpha:\C\to\P$ be a twisting morphism.
	\begin{enumerate}
		\item Let $A,A'$ be two $\P$-algebras. The \emph{deformation complex of $\infty_\alpha$-morphisms of $\P$-algebras} from $A$ to $A'$ is the $\SLoo$-algebra $\homa(\bara A,A')$.
		\item Let $C',C$ be two $\C$-coalgebras. The \emph{deformation complex of $\infty_\alpha$-morphisms of $\C$-coalgebras} from $C'$ to $C$ is the $\SLoo$-algebra $\homa(C',\cobara C)$.
	\end{enumerate}
\end{definition}

Indeed, a Maurer--Cartan element of $\homa(\bara A,A')$ is the same thing as a morphism $\bara A\to\bara A'$, i.e. an $\infty_\alpha$-morphism $A\rightsquigarrow A'$. Moreover, being homotopic as morphisms of $\C$-coalgebras gives an equivalence relation between $\infty_\alpha$-morphisms of $\P$-algebras, as the bar construction $\bara$ lands in the bifibrant $\C$-coalgebras. In \cite[Sect. 3.2]{val14}, it was shown that if $\alpha$ is Koszul, then this is the correct notion of homotopy equivalence for $\infty_\alpha$-morphisms. Dually, a Maurer--Cartan element of $\homa(C',\cobara C)$ is the same thing as an $\infty_\alpha$-morphism $C'\rightsquigarrow C$ of $\C$-coalgebras, and being homotopic as morphisms of $\P$-algebras is an equivalence relation on these morphisms.

\section{Compatibility between convolution algebras and \texorpdfstring{$\infty$}{infinity}-morphisms}\label{sect:main theorem and compatibility with oo-morphisms}

The main result of this section is the fact that certain natural maps between certain deformation complexes are morphisms of $\SLoo$-algebras. It has many interesting and important consequences, which we explore in \cref{subsect:two bifunctors,subsect:bifunctors commute up to homotopy}. In particular, we recover the two bifunctors from \cite[Sect. 5]{rnw17} extending $\homa(-,-)$, and we prove that they commute in the homotopy category of $\SLoo$-algebras and their $\infty$-morphisms.

\subsection{Statement of the main theorem}

Fix a twisting morphism $\alpha:\C\to\P$, and let $A,A'$ be two $\P$-algebras. Suppose we are given
\[
x\in\homa(\bara A,A')\ .
\]
Then given any $\C$-coalgebra $C$, we define a map
\[
\homa_r(1,x):\bari\homa(C,A)\longrightarrow\homa(C,A')\ ,
\]
where $\iota:\com^\vee\to\cobar\com^\vee = \SLoo$ is the natural twisting morphism. It is given as follows. Let $f_1,\ldots,f_n\in\hom(C,A)$, and let $F\coloneqq f_1\otimes\cdots\otimes f_n$ for shortness. Similarly to what done before, denote by
\[
F^\S:\left(\C(n)\otimes C^{\otimes n}\right)^{\S_n}\longrightarrow\left(\C(n)\otimes A^{\otimes n}\right)^{\S_n}
\]
the map
\[
F^\S\coloneqq\sum_{\sigma\in\S_n}(-1)^{\sigma(F)}\id_\C\otimes f_{\sigma(1)}\otimes\cdots\otimes f_{\sigma(n)}\ ,
\]
where $\sigma(F)$ is the Koszul sign obtained by switching around the $f_i$. We define $\homa_r(1,x)$ by the following diagram
\begin{center}
	\begin{tikzpicture}
		\node (a) at (0,0) {$C$};
		\node (b) at (4,0) {$\C(C)$};
		\node (c) at (4,-2) {$\left(\C(n)\otimes C^{\otimes n}\right)^{\S_n}$};
		\node (d) at (4,-4) {$\left(\C(n)\otimes A^{\otimes n}\right)^{\S_n}$};
		\node (e) at (0,-4) {$A'$};
		
		\node at (5.9,-4) {$\subset\C(A)$};
		
		\draw[->] (a) to node[above]{$\Delta_C$} (b);
		\draw[->] (b) to node[right]{$\proj_n$} (c);
		\draw[->] (c) to node[right]{$F^\S$} (d);
		\draw[->] (d) to node[above]{$x$} (e);
		\draw[->,dashed] (a) to node[left]{$\homa_r(1,x)(\mu_n^\vee\otimes F)$} (e);
	\end{tikzpicture}
\end{center}
Dually, let $C',C$ be two $\C$-coalgebras, and suppose we have $y\in\homa(C',\cobara C)$. Given a $\P$-algebra $A$, we define a map
\[
\homa_\ell(y,1):\bari\homa(C,A)\longrightarrow\homa(C',A)
\]
by sending $\mu_n^\vee\otimes F$ to the following map.
\begin{center}
	\begin{tikzpicture}
	\node (a) at (0,0) {$C'$};
	\node (b) at (4,0) {$\P(C)$};
	\node (c) at (4,-2) {$\left(\P(n)\otimes C^{\otimes n}\right)^{\S_n}$};
	\node (d) at (4,-4) {$\left(\P(n)\otimes A^{\otimes n}\right)^{\S_n}$};
	\node (e) at (0,-4) {$A$};
	
	\node at (5.9,-4) {$\subset\P(A)$};
	
	\draw[->] (a) to node[above]{$y$} (b);
	\draw[->] (b) to node[right]{$\proj_n$} (c);
	\draw[->] (c) to node[right]{$F^\S$} (d);
	\draw[->] (d) to node[above]{$\gamma_A$} (e);
	\draw[->,dashed] (a) to node[left]{$\homa_\ell(y,1)(\mu_n^\vee\otimes F)$} (e);
	\end{tikzpicture}
\end{center}
Here, we implicitly used the fact that we are working over a field of characteristic $0$ to identify invariants and coinvariants.

\medskip

The main result of this section is the following one.

\begin{theorem}\label{thm:fundamental thm of convolution algebras}
	Let $\alpha:\C\to\P$ be a twisting morphism. Let $C',C$ be two $\C$-coalgebras, and let $A,A'$ be two $\P$-algebras.
	\begin{enumerate}
		\item The map
		\[
		\homa_r(1,-):\homa(\bara A,A')\longrightarrow\homi(\bari\homa(C,A),\homa(C,A'))
		\]
		is a strict morphism of $\SLoo$-algebras.
		\item The map
		\[
		\homa_\ell(-,1):\homa(C',\cobara C)\longrightarrow\homi(\bari\homa(C,A),\homa(C',A))
		\]
		is a strict morphism of $\SLoo$-algebras.
	\end{enumerate}
\end{theorem}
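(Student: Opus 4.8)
The plan is to reduce both statements to a single computation on the convolution operad $\hom(\C,\P)$, using the fact that the $\SLoo$-structures on all convolution algebras involved are pulled back from this operad along $\M_\alpha$ (resp. the corresponding structure maps), and that $\infty$-morphisms relative to $\iota$ are governed by the bar–cobar adjunction for $\SLoo = \cobar\com^\vee$. I would treat part (1) in detail and obtain part (2) by a formal dualization, since the cobar construction $\cobara$ and the algebra structure map $\gamma_A$ play, on the coalgebra side, exactly the roles that $\bara$ and the coalgebra structure map $\Delta_C$ play on the algebra side.

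First I would unwind the target. An $\infty_\iota$-morphism $\bari\homa(C,A) \to \homa(C,A')$ is by definition a morphism of $\com^\vee$-coalgebras $\bar\,\homa(C,A) \to \bar\,\homa(C,A')$, i.e. a degree-preserving family of maps $\homa(C,A)^{\otimes n} \to \homa(C,A')$ satisfying the usual compatibility with the codifferentials; equivalently, by the universal property of $\cobar\com^\vee$, it is a Maurer–Cartan element in a suitable convolution $\SLoo$-algebra built from $\hom(\com^\vee, \hom(\com^\vee,\ldots))$-type data. The assignment $x \mapsto \homa_r(1,x)$, read through the diagram defining it, is visibly \emph{linear} in $x$ and sends the $n$-ary component indexed by $\mu_n^\vee$ to the composite $x \circ F^\S \circ \proj_n \circ \Delta_C$. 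So the content of ``$\homa_r(1,-)$ is a strict morphism of $\SLoo$-algebras'' is the assertion that this linear map intertwines the unary part (the internal differentials) and the $\ell_n$-operations on the two sides. Since the $\SLoo$-structure on $\homa(\bara A, A')$ comes from $\M_\alpha$ applied to the cooperad structure of $\bara A$ and the operad structure of $A'$, and the $\SLoo$-structure on $\homi(\bari\homa(C,A), \homa(C,A'))$ comes from $\M_\iota$ and the operad structure of $\hom$ of the two $\SLoo$-algebras, this becomes a purely operadic identity: I would show that the relevant diagram of maps of (co)operads
\[
\com^\vee \longrightarrow \hom\bigl(\com^\vee, \hom(\C, \P)\bigr), \qquad
\mu_n^\vee \longmapsto \bigl(\mu_m^\vee \mapsto (\text{structure map of } \bar\hom(\C,\P))\bigr)
\]
commutes, where the two paths correspond to ``first apply the cofree $\com^\vee$-coalgebra structure on the bar construction, then $\M_\alpha$'' versus ``first apply $\M_\iota$ componentwise, then compose in the convolution operad''. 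The heart of the matter is the compatibility between the cofree cocommutative coalgebra structure on $\bar\,\homa(C,A)$ (which is where the ``$\bari$'' in the source comes from, and which carries the shuffle-type coproduct unwinding the $F^\S$ symmetrization) and the differential on $\bara A$, which encodes precisely the $\SLoo$-operations $\ell_n$ of $\homa(\bara A, A')$ via $\M_\alpha$.

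Concretely, the key step is a coherence check that splits into two pieces: (a) the \emph{internal-differential} part, where one verifies that $\partial(\homa_r(1,x)) = \homa_r(1, \partial x)$, which follows by chasing the Leibniz rule through the diagram and using that $\Delta_C$, $\proj_n$, and $F^\S$ are chain maps up to the Koszul signs already accounted for in $\sigma(F)$; and (b) the \emph{bracket} part, where one must show that applying an $\ell_k$ on $\homa(\bara A, A')$ and then $\homa_r(1,-)$ equals applying $\homa_r(1,-)$ to the inputs and then the corresponding $\ell_k$ in $\homi(\ldots)$. Piece (b) is where I expect the real obstacle: it requires expanding the operad composition in $\hom(\com^\vee, \hom(\C,\P))$ and matching it, term by term with correct signs and symmetrizations, against the composite of the coproduct on $\bara A$ with $\M_\alpha$. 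The main technical nuisance is bookkeeping the symmetric-group actions — one has the $\S_n$-action in $(\C(n)\otimes C^{\otimes n})^{\S_n}$, the $\S_m$-action from the $\mu_m^\vee$ in the target, and the implicit invariants-to-coinvariants identifications — and checking that all Koszul signs produced by shuffling the $f_i$ past one another and past the structure maps cancel correctly. I would organize this by first proving the identity at the level of the convolution operad (where there are no algebras or coalgebras yet, only $\C$, $\P$, $\com^\vee$, and the twisting morphisms $\alpha$ and $\iota$), and then specializing: once the operadic diagram commutes, plugging in a $\C$-coalgebra $C$ and $\P$-algebras $A$, $A'$ and pulling back structures gives both claimed strict morphisms of $\SLoo$-algebras, with part (2) obtained by running the same argument with the roles of $\C$ and $\P$, and of $\bara$ and $\cobara$, interchanged, and $x$ replaced by $y$ and $\Delta_C$ by $\gamma_A$.
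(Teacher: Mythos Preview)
Your outline identifies the correct two obligations---compatibility with the differentials and compatibility with the higher brackets---but you have the difficulty located in the wrong place, and this leads to a genuine gap.

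You claim that the differential compatibility (your piece (a)) ``follows by chasing the Leibniz rule through the diagram and using that $\Delta_C$, $\proj_n$, and $F^\S$ are chain maps''. This is not enough. The differential on $\homa(\bara A,A')$ involves $d_{\bara A}$, which is not just the linear differential $d_{\C\circ A}$: it has a second, twisted term built from $\Delta_{(1)}$, $\alpha$, and the algebra structure $\gamma_A$. Likewise, the differential on the target $\homi(\bari\homa(C,A),\homa(C,A'))$ involves $d_{\bari\homa(C,A)}$, whose twisted part encodes \emph{all} the operations $\ell_n$ of $\homa(C,A)$. Matching these two twisted pieces against each other is the substantive computation; it does not follow from the Leibniz rule or from anything being a chain map. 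In the paper's proof this is exactly where the technical lemma on $(\Delta_{(1)}\circ 1_C)\Delta_C^n$ is invoked, and where the terms (L2), (T2), (T3) are explicitly identified. By contrast, the bracket compatibility (your piece (b)) is the easier half: it reduces to the coassociativity identity $(\Delta_\C\circ 1_C)\Delta_C = (1_\C\circ\Delta_C)\Delta_C$ projected onto the right arity, with the sign bookkeeping you mention but no deeper obstruction.

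Separately, the proposed reduction to a single identity ``at the level of the convolution operad'' via a diagram $\com^\vee \to \hom(\com^\vee,\hom(\C,\P))$ is not made precise enough to carry weight: you have not said what structure this target carries or what it would mean for the diagram to commute, and the map $\homa_r(1,-)$ genuinely depends on the coalgebra structure $\Delta_C$ and the bar differential of $A$, not merely on $\C$, $\P$, $\alpha$. The paper does not attempt such a reduction; it simply writes out both sides explicitly for $\ell_k$ and for $d$ and compares term by term. If you want to pursue your approach, you would first need to formulate exactly which operadic object replaces the role of $C$ and $A$, and then still carry out the twisted-differential matching described above.
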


The proof of this result is technical, and we postpone it to \cref{subsect: proof of fundamental thm of convolution algebras}.

\subsection{\texorpdfstring{$\infty$}{Infinity}-morphisms and two bifunctors}\label{subsect:two bifunctors}

We will now begin to unravel the consequences of \cref{thm:fundamental thm of convolution algebras}.

\medskip

The reader might have recognized the diagrams defining $\homa_r(1,-)$ and $\homa_\ell(-,1)$, as they are very similar to the ones found in \cite[Sect. 5]{rnw17}, which define two extensions of the functor $\homa(-,-)$ to take $\infty_\alpha$-morphisms in one slot or the other. We can use \cref{thm:MC and gauges of convolution homotopy algebras} to easily recover one of the main results found in \emph{loc. cit.} From now on we will identify $\infty_\alpha$-morphisms $\Psi:A\rightsquigarrow A'$ of $\P$-algebras with the associated Maurer--Cartan elements $\Psi\in\homa(\bara A,A')$, and similarly for $\infty_\alpha$-morphisms of $\C$-coalgebras.

\begin{corollary}[{\cite[Thm. 5.1]{rnw17}}]
	Let $\alpha:\C\to\P$ be a twisting morphism. Let $C',C$ be two $\C$-coalgebras, and let $A,A'$ be two $\P$-algebras.
	\begin{enumerate}
		\item Let $\Psi:A\rightsquigarrow A'$ be an $\infty_\alpha$-morphism of $\P$-algebras. Then
		\[
		\homa_r(1,\Psi):\homa(C,A)\rightsquigarrow\homa(C,A')
		\]
		is an $\infty$-morphism of $\SLoo$-algebras.
		\item Let $\Phi:C'\rightsquigarrow C$ be an $\infty_\alpha$-morphism of $\C$-coalgebras. Then
		\[
		\homa_\ell(\Phi,1):\homa(C,A)\rightsquigarrow\homa(C',A)
		\]
		is an $\infty$-morphism of $\SLoo$-algebras.
	\end{enumerate}
\end{corollary}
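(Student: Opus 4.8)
The plan is to deduce the Corollary directly from \cref{thm:fundamental thm of convolution algebras} together with \cref{thm:MC and gauges of convolution homotopy algebras}, by reinterpreting each of the two assertions as a statement about the image of a Maurer--Cartan element under a strict morphism of $\SLoo$-algebras. The key observation is that a strict morphism of $\SLoo$-algebras preserves the $\infty$-structure on the nose, and therefore sends Maurer--Cartan elements to Maurer--Cartan elements; moreover, by the explicit descriptions of $\homa_r(1,-)$ and $\homa_\ell(-,1)$ given by the diagrams preceding the main theorem, the image of such a Maurer--Cartan element is exactly the $\infty$-morphism claimed in the statement.

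For part (1): an $\infty_\alpha$-morphism $\Psi:A\rightsquigarrow A'$ is by definition a morphism of $\C$-coalgebras $\bara A\to\bara A'$, which by \cref{thm:MC and gauges of convolution homotopy algebras} (applied with the coalgebra $\bara A$ and the algebra $A'$) is the same datum as a Maurer--Cartan element $\Psi\in\MC(\homa(\bara A,A'))$. By \cref{thm:fundamental thm of convolution algebras}(1), the map $\homa_r(1,-):\homa(\bara A,A')\to\homi(\bari\homa(C,A),\homa(C,A'))$ is a strict morphism of $\SLoo$-algebras, hence it sends $\Psi$ to a Maurer--Cartan element of the convolution $\SLoo$-algebra $\homi(\bari\homa(C,A),\homa(C,A'))$. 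Unwinding the definition of the convolution $\SLoo$-algebra associated to the twisting morphism $\iota:\com^\vee\to\SLoo$, a Maurer--Cartan element of $\homi(\bari\homa(C,A),\homa(C,A'))$ is precisely a morphism of $\SLoo$-coalgebras $\bari\homa(C,A)\to\bari\homa(C,A')$, i.e. an $\infty$-morphism $\homa(C,A)\rightsquigarrow\homa(C,A')$. Checking that this $\infty$-morphism is the one denoted $\homa_r(1,\Psi)$ is then a matter of comparing the defining diagram with the structure maps, which is essentially bookkeeping. Part (2) is entirely dual, using \cref{thm:fundamental thm of convolution algebras}(2) and \cref{thm:MC and gauges of convolution homotopy algebras} with the coalgebra $C'$ and the algebra $\cobara C$, so that an $\infty_\alpha$-morphism $\Phi:C'\rightsquigarrow C$ of $\C$-coalgebras corresponds to a Maurer--Cartan element of $\homa(C',\cobara C)$.

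The one point requiring care — and the main (mild) obstacle — is the identification, valid for any $\SLoo$-algebra $\mathfrak g$, between $\MC(\mathfrak g)$ and the set of $\infty$-morphisms from the trivial $\SLoo$-algebra (or equivalently, morphisms of $\SLoo$-coalgebras into $\bari\mathfrak g$), so that Maurer--Cartan elements of a convolution $\SLoo$-algebra $\homi(\mathfrak g,\mathfrak h)$ are the same as $\infty$-morphisms $\mathfrak g\rightsquigarrow\mathfrak h$. This is the $\SLoo$-analogue of \cref{thm:MC and gauges of convolution homotopy algebras} in the special case $\P=\SLoo$, $\C=\com^\vee$, $\alpha=\iota$, and coalgebra $\bari\mathfrak g$; it is standard (see e.g. \cite{lodayvallette}), so I would simply invoke it. Everything else is formal: the corollary is the combination of ``strict $\SLoo$-morphisms preserve $\MC$'' with two translations between $\MC$-sets and (coalgebra) morphism sets, plus a diagram chase identifying the resulting $\infty$-morphism with $\homa_r(1,\Psi)$, respectively $\homa_\ell(\Phi,1)$.
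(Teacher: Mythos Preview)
Your proposal is correct and follows essentially the same argument as the paper: identify $\Psi$ with a Maurer--Cartan element of $\homa(\bara A,A')$, push it forward along the strict $\SLoo$-morphism $\homa_r(1,-)$ of \cref{thm:fundamental thm of convolution algebras}, and interpret the resulting Maurer--Cartan element of $\homi(\bari\homa(C,A),\homa(C,A'))$ as an $\infty$-morphism. The paper's proof is slightly terser, but the logic and the ingredients are the same.
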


\begin{proof}
	We prove only the first statement, the proof of the second one being completely analogous. The $\infty_\alpha$-morphism $\Psi$ corresponds to a Maurer--Cartan element in $\homa(\bara A,A')$, which we denote again by $\Psi$ by abuse of notation. Since the map $\homa_r(1,-)$ is a morphism of $\SLoo$-algebras by \cref{thm:fundamental thm of convolution algebras}, it preserves Maurer--Cartan elements, so that $\homa_r(1,\Psi)$ is a Maurer--Cartan element of $\homi(\bari\homa(C,A),\homa(C,A'))$. But this is nothing else than to say that $\homa_r(1,\Psi)$ is an $\infty$-mor\-phism of $\SLoo$-algebras from $\homa(C,A)$ to $\homa(C,A')$, as we wanted.
\end{proof}

But \cref{thm:fundamental thm of convolution algebras} gives us even more than that.

\begin{corollary}
	Let $\alpha:\C\to\P$ be a twisting morphism. Let $C',C$ be two $\C$-coalgebras, and let $A,A'$ be two $\P$-algebras.
	\begin{enumerate}
		\item Let $\Psi,\Psi':A\rightsquigarrow A'$ be two $\infty_\alpha$-morphisms of $\P$-algebras. If $\Psi$ and $\Psi'$ are homotopic, then so are the $\infty$-morphisms of $\SLoo$-algebras $\homa_r(1,\Psi)$ and $\homa_r(1,\Psi')$.
		\item Let $\Phi,\Phi':C'\rightsquigarrow C$ be two $\infty_\alpha$-morphisms of $\C$-coalgebras. If $\Phi$ and $\Phi'$ are homotopic, then so are the $\infty$-morphisms of $\SLoo$-algebras $\homa_\ell(\Phi,1)$ and $\homa_\ell(\Phi',1)$.
	\end{enumerate}
\end{corollary}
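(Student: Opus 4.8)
The plan is to deduce both items formally from \cref{thm:fundamental thm of convolution algebras} and \cref{thm:MC and gauges of convolution homotopy algebras}, by systematically translating the word ``homotopic'' into ``gauge equivalent'' at both ends. I would spell out the argument for the first item; the second is entirely dual, with \cref{thm:fundamental thm of convolution algebras}(2) and $\homa(C',\cobara C)$ in place of \cref{thm:fundamental thm of convolution algebras}(1) and $\homa(\bara A,A')$, and $\homa_\ell$ in place of $\homa_r$.

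First I would record the relevant instance of \cref{thm:MC and gauges of convolution homotopy algebras}: applied to the twisting morphism $\alpha$, the $\C$-coalgebra $\bara A$, and the $\P$-algebra $A'$, it identifies $\hom_{\Ccog}(\bara A,\bara A')$ with $\MC(\homa(\bara A,A'))$, and by its point (\ref{pt:1 MC and gauges}) two morphisms of $\C$-coalgebras $\bara A\to\bara A'$ --- i.e. two $\infty_\alpha$-morphisms $A\rightsquigarrow A'$ --- are homotopic if and only if the associated Maurer--Cartan elements of $\homa(\bara A,A')$ are gauge equivalent. So the hypothesis that $\Psi$ and $\Psi'$ are homotopic says precisely that $\Psi$ and $\Psi'$ are gauge equivalent in $\homa(\bara A,A')$.

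Next I would invoke \cref{thm:fundamental thm of convolution algebras}(1), according to which the map $\homa_r(1,-)\colon\homa(\bara A,A')\to\homi(\bari\homa(C,A),\homa(C,A'))$ is a \emph{strict} morphism of $\SLoo$-algebras. A strict morphism of $\SLoo$-algebras preserves Maurer--Cartan elements and is natural with respect to the gauge action, since that action is expressed entirely through the brackets $\ell_n$, which are carried over on the nose. The point that requires care is that the gauge action is in fact well defined on the convolution algebras at hand; this holds because all the coalgebras involved are conilpotent, which makes the relevant filtration complete and the gauge flow convergent. Granting this, $\homa_r(1,\Psi)$ and $\homa_r(1,\Psi')$ are gauge-equivalent Maurer--Cartan elements of $\homi(\bari\homa(C,A),\homa(C,A'))$, and this is the one step I expect to require genuine attention.

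Finally I would apply \cref{thm:MC and gauges of convolution homotopy algebras} a second time, now to the universal twisting morphism $\iota\colon\com^\vee\to\SLoo$, the $\com^\vee$-coalgebra $\bari\homa(C,A)$, and the $\SLoo$-algebra $\homa(C,A')$. It tells us that the Maurer--Cartan elements of $\homi(\bari\homa(C,A),\homa(C,A'))$ are exactly the $\infty$-morphisms of $\SLoo$-algebras from $\homa(C,A)$ to $\homa(C,A')$, and that two of them are gauge equivalent if and only if they are homotopic as $\infty$-morphisms. Chaining the three steps together, $\homa_r(1,\Psi)$ and $\homa_r(1,\Psi')$ are homotopic, which proves the first item; the second follows by the dual argument outlined above.
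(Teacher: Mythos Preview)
Your argument is correct and follows exactly the same route as the paper: translate ``homotopic'' to ``gauge equivalent'' via \cref{thm:MC and gauges of convolution homotopy algebras}, push gauge equivalence through the strict $\SLoo$-morphism of \cref{thm:fundamental thm of convolution algebras}, and translate back. You are simply more explicit about the second application of \cref{thm:MC and gauges of convolution homotopy algebras} (with $\iota$ in place of $\alpha$) and about the completeness needed for the gauge action, which the paper leaves implicit.
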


\begin{proof}
	Again, we will only prove the first statement. The fact that $\Psi$ and $\Psi'$ are homotopic is equivalent to saying that the associated Maurer--Cartan elements of $\homa(\bara A,A')$ are gauge equivalent (by \cref{thm:MC and gauges of convolution homotopy algebras}). Since the map $\homa_r(1,-)$ is a morphism of $\SLoo$-algebras, this implies that the Maurer--Cartan elements $\homa_r(1,\Psi)$ and $\homa_r(1,\Psi')$ are also gauge equivalent, which is equivalent to say that the associated $\infty$-morphisms of $\SLoo$-algebras are homotopic.
\end{proof}

There is one important but straightforward fact that was not proven in \cite{rnw17}, which relates the morphisms $\homa_r(1,-)$ and $\homa_\ell(-,1)$, and compositions of morphisms. Recall that the action of an $\infty$-morphisms $\Theta:\g\to\h$ of $\SLoo$-algebras on Maurer--Cartan elements is given by
\[
\MC(\Theta)(x)\coloneqq\sum_{n\ge1}\frac{1}{n!}\theta_n(x,\ldots,x)\in\MC(\h)
\]
on $x\in\MC(\g)$.

\begin{proposition}\label{prop:hom and compositions}
	Let $\alpha:\C\to\P$ be a twisting morphism. Let $C',C$ be two $\C$-coalgebras, and let $A,A'$ be two $\P$-algebras.
	\begin{enumerate}
		\item Let $\Psi:A\rightsquigarrow A'$ be an $\infty_\alpha$-morphism of $\P$-algebras, and let $f:C\to\bara A$ be a morphism of $\C$-coalgebras, which we see as a Maurer--Cartan element of $\homa(C,A)$. Then
		\[
		\homa_r(1,\Psi)(f) = \left(C\stackrel{f}{\longrightarrow}\bara A\stackrel{\Psi}{\longrightarrow}\bara A'\right)
		\]
		is a morphism of $\C$-coalgebras.
		\item Let $\Phi:C'\rightsquigarrow C$ be an $\infty_\alpha$-morphism of $\C$-coalgebras, and let $g:\cobara C\to A$ be a morphism of $\P$-algebras, which we see as a Maurer--Cartan element of $\homa(C,A)$. Then
		\[
		\homa_\ell(\Phi,1)(g) = \left(\cobara C'\stackrel{\Phi}{\longrightarrow}\cobara C\stackrel{g}{\longrightarrow}A\right)
		\]
		is a morphism of $\P$-algebras.
	\end{enumerate}
\end{proposition}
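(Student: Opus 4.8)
The plan is to push everything through the dictionary of \cref{thm:MC and gauges of convolution homotopy algebras} between morphisms of (co)algebras and Maurer--Cartan elements, and then read off the Maurer--Cartan action of the $\infty$-morphism $\homa_r(1,\Psi)$, resp. $\homa_\ell(\Phi,1)$, straight from the diagram defining it. I will carry out point (1); point (2) is entirely dual, obtained by transposing the diagram, replacing the cofree coalgebra $\bara A=\C(A)$ and its projection onto cogenerators by the free algebra $\cobara C=\P(C)$ and its inclusion of generators, and using the bijection $\MC(\homa(C',A))\cong\hom_{\Palg}(\cobara C',A)$ together with the algebra structure map $\gamma_A$ playing the role of $\pi_{A'}$ below.

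Write $\overline{f}\coloneqq\pi_A\circ f:C\to A$ for the Maurer--Cartan element of $\homa(C,A)$ corresponding to $f$ under \cref{thm:MC and gauges of convolution homotopy algebras}, where $\pi_A:\bara A=\C(A)\to A$ is the projection onto cogenerators. Recall that $f$ is reconstructed from $\overline{f}$ by cofreeness, namely $f=\sum_{n\ge1}\iota_n\circ(\id_{\C(n)}\otimes\overline{f}^{\,\otimes n})\circ\Delta_C^n$, where $\Delta_C^n=\proj_n\circ\Delta_C:C\to\left(\C(n)\otimes C^{\otimes n}\right)^{\S_n}$ and $\iota_n$ is the inclusion of the arity-$n$ summand into $\C(A)$. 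Applying the same bijection with $C$ replaced by $\bara A$ identifies $\Psi$ with the Maurer--Cartan element $\overline{\Psi}\coloneqq\pi_{A'}\circ\Psi\in\homa(\bara A,A')$, and identifies the asserted composite $\Psi\circ f:C\to\bara A'$ with $\pi_{A'}\circ\Psi\circ f\in\homa(C,A')$. Hence the statement is equivalent to the identity $\homa_r(1,\Psi)(\overline{f})=\pi_{A'}\circ\Psi\circ f$ of Maurer--Cartan elements of $\homa(C,A')$.

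By \cref{thm:fundamental thm of convolution algebras} the map $\homa_r(1,-)$ is a strict morphism of $\SLoo$-algebras, so it sends the Maurer--Cartan element $\overline{\Psi}$ to a Maurer--Cartan element of $\homi(\bari\homa(C,A),\homa(C,A'))$, that is, to the $\infty$-morphism $\homa_r(1,\Psi):\homa(C,A)\rightsquigarrow\homa(C,A')$; its $n$-th structure map is the value of the defining diagram on the cogenerator $\mu_n^\vee\otimes f_1\otimes\cdots\otimes f_n$, namely $\theta_n(f_1,\dots,f_n)=\overline{\Psi}\circ\iota_n\circ(f_1\otimes\cdots\otimes f_n)^\S\circ\Delta_C^n$. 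By the Maurer--Cartan action formula recalled just before the statement,
\[
\homa_r(1,\Psi)(\overline{f})=\sum_{n\ge1}\frac1{n!}\,\theta_n(\overline{f},\dots,\overline{f})=\sum_{n\ge1}\frac1{n!}\,\overline{\Psi}\circ\iota_n\circ(\overline{f}^{\,\otimes n})^\S\circ\Delta_C^n.
\]
All arguments being equal to the single degree-$0$ element $\overline{f}$ --- Maurer--Cartan elements of a convolution $\SLoo$-algebra lie in degree $0$, as the operations $\ell_n$ have degree $-1$ --- every Koszul sign in the symmetrization is trivial and each of the $n!$ summands of $(\overline{f}^{\,\otimes n})^\S$ equals $\id_{\C(n)}\otimes\overline{f}^{\,\otimes n}$; thus $(\overline{f}^{\,\otimes n})^\S=n!\,(\id_{\C(n)}\otimes\overline{f}^{\,\otimes n})$. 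The factors $n!$ cancel and, comparing with the cofreeness formula above, $\homa_r(1,\Psi)(\overline{f})=\overline{\Psi}\circ\bigl(\sum_{n\ge1}\iota_n\circ(\id_{\C(n)}\otimes\overline{f}^{\,\otimes n})\circ\Delta_C^n\bigr)=\overline{\Psi}\circ f=\pi_{A'}\circ\Psi\circ f$, as required.

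Structurally this is a formal consequence of \cref{thm:fundamental thm of convolution algebras,thm:MC and gauges of convolution homotopy algebras}; the one point deserving care --- and the main, albeit routine, obstacle --- is the bookkeeping of normalizations and Koszul signs when collapsing the symmetrization, i.e. reconciling the constant appearing in the cofreeness reconstruction of $f$, the $\tfrac1{n!}$ in the Maurer--Cartan action, and the $n!$ produced by symmetrizing $n$ equal degree-$0$ inputs. The dual computation for $\homa_\ell(\Phi,1)$ is identical after transposition and shows that $\homa_\ell(\Phi,1)(\overline{g})$ is the Maurer--Cartan element of $\homa(C',A)$ corresponding to the composite $g\circ\Phi:\cobara C'\to A$.
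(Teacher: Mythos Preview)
Your proof is correct and follows essentially the same route as the paper's: translate both $f$ and $\Psi$ into their corresponding Maurer--Cartan elements $\overline{f}=\pi_A f$ and $\overline{\Psi}=\pi_{A'}\Psi$, apply the Maurer--Cartan action formula $\sum_{n\ge1}\tfrac{1}{n!}\theta_n(\overline{f},\dots,\overline{f})$, collapse the symmetrization using that the $n!$ equal degree-$0$ inputs give $(\overline{f}^{\,\otimes n})^\S = n!\,(\id_{\C(n)}\otimes\overline{f}^{\,\otimes n})$, and recognize the result as $\pi_{A'}\Psi f$. The paper's computation is more compressed and does not spell out the $n!$ cancellation, but the argument is the same.
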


\begin{proof}
	In order to give a clear proof, we will write $\widetilde{f}\in\MC(\homa(C,A))$ for the element $f$ seen as a linear map $C\to A$, and $f$ for the equivalent map of $\C$-coalgebras $C\to\bara A$. We pass from the former to the latter by
	\[
	f = (1_\C\circ\widetilde{f})\Delta_C\ .
	\]
	When writing $\Psi$, we will mean the map of $\C$-coalgebras $\bara A\to\bara A'$, and the associated Maurer--Cartan element is
	\[
	\widetilde{\Psi}\coloneqq\proj_{A'}\Psi\in\MC(\homa(\bara A,A'))\ .
	\]
	With this notation, we have
	\begin{align*}
	\MC(\homa_r(1,\widetilde{\Psi}))(\widetilde{f}) =&\ \sum_{n\ge1}\frac{1}{n!}\homa_r(1,\widetilde{\Psi})(\mu_n^\vee\otimes \widetilde{f}^{\otimes n})\\
	=& \sum_{n\ge1}\frac{1}{n!}\widetilde{\Psi}(\widetilde{f}^{\otimes n})^{\S}\Delta_C^n\\
	=& \sum_{n\ge1}\proj_{A'}\Psi(1_\C\circ \widetilde{f})\Delta_C^n\\
	=&\ \proj_{A'}\Psi(1_\C\circ \widetilde{f})\Delta_C\\
	=&\ \proj_{A'}\Psi f\ ,
	\end{align*}
	where $\mu_1^\vee = \id$. Here, $\MC(\homa_r(1,\widetilde{\Psi}))$ denotes the map induced on the set of Maurer--Cartan elements of $\homa(C,A)$ by the morphism of $\SLoo$-algebras $\homa_r(1,\widetilde{\Psi})$. The other case is similar, and left to the reader.
\end{proof}

In particular, we can take $C = \bara A''$ in \cref{prop:hom and compositions}, so that $f$ is an $\infty_\alpha$-morphism of $\P$-algebras $A''\rightsquigarrow A$, and we recover compositions of $\infty_\alpha$-morphisms. In particular, \cref{prop:hom and compositions} immediately implies that
\[
\homa_r(1,\Psi)\homa_r(1,\Psi') = \homa_r(1,\Psi\Psi')
\]
for composable $\infty_\alpha$-morphisms of $\P$-algebras, and that
\[
\homa_\ell(\Phi',1)\homa_\ell(\Phi,1) = \homa_\ell(\Phi\Phi',1)
\]
for composable $\infty_\alpha$-morphisms of $\C$-coalgebras (notice the contravariance). Thus, we recover another important result.

\begin{corollary}[{\cite[Cor. 5.4]{rnw17}}]
	The bifunctor
	\[
	\homa:(\Ccog)^\mathrm{op}\times\Palg\longrightarrow\Lalg\ ,
	\]
	extends to two bifunctors
	\[
	\homa_r:(\Ccog)^\mathrm{op}\times\infty_\alpha\text{-}\Palg\longrightarrow\Lalg\ ,
	\]
	and
	\[
	\homa_\ell:(\infty_\alpha\text{-}\Ccog)^\mathrm{op}\times\Palg\longrightarrow\Lalg\ .
	\]
\end{corollary}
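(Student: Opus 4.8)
The plan is to deduce the Corollary directly from \cref{thm:fundamental thm of convolution algebras}, \cref{thm:MC and gauges of convolution homotopy algebras}, and \cref{prop:hom and compositions}, with essentially no new computation. The only thing to check is that the assignments $\homa_r$ and $\homa_\ell$ are well defined on objects and morphisms, that they restrict to the original bifunctor $\homa$ on strict morphisms, and that they respect identities and composition.

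First I would set up $\homa_r$. On objects it agrees with $\homa$, i.e. a pair $(C,A)$ with $C$ a $\C$-coalgebra and $A$ a $\P$-algebra is sent to the $\SLoo$-algebra $\homa(C,A)$. On morphisms, a strict morphism $g:C'\to C$ of $\C$-coalgebras acts as before (it is already recorded by the bifunctor $\homa$), while an $\infty_\alpha$-morphism $\Psi:A\rightsquigarrow A'$ is sent to the $\infty$-morphism of $\SLoo$-algebras $\homa_r(1,\Psi):\homa(C,A)\rightsquigarrow\homa(C,A')$. This is well defined by \cref{thm:fundamental thm of convolution algebras}(1): the map $\homa_r(1,-)$ is a strict morphism of $\SLoo$-algebras, hence sends the Maurer--Cartan element $\Psi\in\MC(\homa(\bara A,A'))$ (which is exactly the datum of the $\infty_\alpha$-morphism, by \cref{thm:MC and gauges of convolution homotopy algebras}) to a Maurer--Cartan element of $\homi(\bari\homa(C,A),\homa(C,A'))$, i.e. to an $\infty$-morphism of $\SLoo$-algebras. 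A general morphism in the product category is a pair $(g,\Psi)$, and I would define its image as the composite $\homa_r(1,\Psi)\circ\homa(g,1)$ in the category of $\SLoo$-algebras with $\infty$-morphisms; since strict morphisms sit inside $\infty$-morphisms, this composite makes sense, and the analogous composite in the other order agrees because $\homa(g,1)$ and $\homa_r(1,\Psi)$ are induced by maps in disjoint ``slots'' (one can check this commuting square at the level of the defining diagrams, or simply note it follows from naturality of the construction in $C$). Functoriality in the $\P$-algebra variable is the content of the identity $\homa_r(1,\Psi)\homa_r(1,\Psi') = \homa_r(1,\Psi\Psi')$, which was derived right after \cref{prop:hom and compositions}; that $\homa_r(1,\id_A) = \id_{\homa(C,A)}$ follows by taking $\Psi = \id$ (the identity $\infty_\alpha$-morphism) in the same computation, where only the $n=1$ term survives and equals the identity. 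Functoriality in the $\C$-coalgebra variable is inherited from $\homa$.

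Then I would define $\homa_\ell$ in exactly the dual fashion: on objects it is again $\homa(C,A)$; an $\infty_\alpha$-morphism $\Phi:C'\rightsquigarrow C$ of $\C$-coalgebras is sent to the $\infty$-morphism $\homa_\ell(\Phi,1):\homa(C,A)\rightsquigarrow\homa(C',A)$, well defined by \cref{thm:fundamental thm of convolution algebras}(2) together with \cref{thm:MC and gauges of convolution homotopy algebras}; a strict morphism of $\P$-algebras acts as in $\homa$; and a general morphism is the appropriate composite. Note the contravariance in the coalgebra slot, so that composition reverses: the relevant identity is $\homa_\ell(\Phi',1)\homa_\ell(\Phi,1) = \homa_\ell(\Phi\Phi',1)$, again from \cref{prop:hom and compositions}, and $\homa_\ell(\id_C,1) = \id$ from the $n=1$ term. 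Finally, both $\homa_r$ and $\homa_\ell$ restrict to $\homa$ on the subcategories of strict morphisms: a strict morphism of $\P$-algebras $\psi:A\to A'$ corresponds, under $\bara$, to the $\infty_\alpha$-morphism $\bara\psi$, whose associated Maurer--Cartan element is concentrated in ``weight one'', so $\homa_r(1,\bara\psi)$ reduces to its linear part, which by the defining diagram is precisely $\homa(1,\psi)$; dually for $\homa_\ell$.

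I do not expect any genuine obstacle here, since all the analytic content is already packaged in the three results cited. The one point that merits a sentence of care is the compatibility of the two slots inside each bifunctor --- that $\homa(g,1)$ and $\homa_r(1,\Psi)$ commute, and dually $\homa(1,\psi)$ and $\homa_\ell(\Phi,1)$ commute --- so that the image of a general pair $(g,\Psi)$ is unambiguous; this is immediate from the fact that the construction $\homa_r(1,-)$ is natural in the coalgebra $C$ (the diagram defining $\homa_r(1,x)(\mu_n^\vee\otimes F)$ depends on $C$ only through the structure map $\Delta_C$, which $g$ intertwines), and similarly $\homa_\ell(-,1)$ is natural in $A$. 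Everything else is bookkeeping, so the proof is short.
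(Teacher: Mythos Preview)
Your proposal is correct and follows essentially the same approach as the paper: the paper's proof also reduces everything to the functoriality identities $\homa_r(1,\Psi)\homa_r(1,\Psi') = \homa_r(1,\Psi\Psi')$ and $\homa_\ell(\Phi',1)\homa_\ell(\Phi,1) = \homa_\ell(\Phi\Phi',1)$ derived from \cref{prop:hom and compositions}, together with the straightforward check that $\homa(f,1)$ commutes with $\homa_r(1,\Psi)$ for a strict coalgebra morphism $f$. You are simply more explicit about the remaining bookkeeping (identities, restriction to strict morphisms), which the paper leaves implicit.
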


\begin{proof}
	It is straightforward to check that if $f:C'\to C$ is a strict morphism of $\C$-coalgebras, and $\Psi$ is an $\infty_\alpha$-morphism of $\P$-algebras, then
	\[
	\homa(f,1)\homa_r(1,\Psi) = \homa_r(1,\Psi)\homa(f,1)\ .
	\]
	What said above then concludes the proof that $\homa_r(-,-)$ is a bifunctor. The proof for $\homa_\ell(-,-)$ is analogous. 
\end{proof}

\subsection{The two bifunctors commute up to homotopy}\label{subsect:bifunctors commute up to homotopy}

Having extended the bifunctor $\homa(-,-)$ to the two bifunctors $\homa_r(-,-)$ and $\homa_\ell(-,-)$ accepting $\infty_\alpha$-morphisms in the right and left slot respectively, it is natural to ask if those two functors admit a common extension to a bifunctor accepting $\infty_\alpha$-morphisms in both slots simultaneously. Unfortunately, this is not possible, as was proven by the authors in \cite[Sect. 6]{rnw17}. In the present paper, we will prove the next best thing.

\begin{theorem}\label{thm:compositions are homotopic}
	Let $\alpha:\C\to\P$ be a Koszul morphism. Let $C',C$ be two $\C$-coalgebras and $\Phi:C'\rightsquigarrow C$ an $\infty_\alpha$-morphism between them, and let $A,A'$ be two $\P$-algebras and $\Psi:A\rightsquigarrow A'$ an $\infty_\alpha$-morphism between them. The two composites
	\[
	\homa_\ell(\Phi,1)\homa_r(1,\Psi)\qquad\text{and}\qquad\homa_r(1,\Psi)\homa_\ell(\Phi,1)
	\]
	are homotopic as $\infty$-morphisms of $\SLoo$-algebras from $\homa(C,A)$ to $\homa(C',A')$.
\end{theorem}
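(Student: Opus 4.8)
The plan is to deduce the statement from \cref{thm:fundamental thm of convolution algebras} together with \cref{thm:MC and gauges of convolution homotopy algebras}, by finding a single $\SLoo$-algebra into which both composites map as Maurer--Cartan elements, and then showing that those two Maurer--Cartan elements are gauge equivalent. The point is that $\homa_r(1,\Psi)$ and $\homa_\ell(\Phi,1)$ are built out of Maurer--Cartan elements $\widetilde\Psi\in\MC(\homa(\bara A,A'))$ and $\widetilde\Phi\in\MC(\homa(C',\cobara C))$, and \cref{thm:fundamental thm of convolution algebras} says that the assignments $x\mapsto\homa_r(1,x)$ and $y\mapsto\homa_\ell(y,1)$ are \emph{strict} morphisms of $\SLoo$-algebras. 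Composing these with each other should exhibit each of the two composites in the theorem as the image of the pair $(\widetilde\Phi,\widetilde\Psi)$ under a strict morphism of $\SLoo$-algebras landing in a deformation complex of $\infty$-morphisms $\homi\bigl(\bari\homa(C,A),\homa(C',A')\bigr)$ — the deformation complex governing $\infty$-morphisms of $\SLoo$-algebras from $\homa(C,A)$ to $\homa(C',A')$, by the definition given at the end of \cref{sect:recollections and deformation complex}. By \cref{thm:MC and gauges of convolution homotopy algebras}(\ref{pt:2 MC and gauges}), two Maurer--Cartan elements there are gauge equivalent exactly when the corresponding $\infty$-morphisms are homotopic, so it suffices to produce a gauge.

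First I would set up the iterated application carefully. Applying part (1) of \cref{thm:fundamental thm of convolution algebras} with the coalgebra $C'$ in place of $C$ gives a strict morphism $\homa(\bara A,A')\to\homi(\bari\homa(C',A),\homa(C',A'))$; applying part (2) with the algebra $A$ gives $\homa(C',\cobara C)\to\homi(\bari\homa(C,A),\homa(C',A))$. The subtlety is that these two land in deformation complexes of $\infty$-morphisms between \emph{different} pairs, so I cannot just compose them naively; I need to apply the two constructions to the bar/cobar of the \emph{other} side, i.e. to feed $\homa_\ell(\widetilde\Phi,1)$ as the $x$-variable into the first construction and $\homa_r(1,\widetilde\Psi)$ as the $y$-variable into the second, and to check that applying $\homa_r(1,-)$ and $\homa_\ell(-,1)$ to a map of $\SLoo$-algebras is compatible with the operadic bar/cobar constructions used in the source. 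Concretely, I expect that $\homa_\ell(\Phi,1)\homa_r(1,\Psi)$ equals the image of $\widetilde\Psi$ under the composite of part (1) of \cref{thm:fundamental thm of convolution algebras} with a functorial map induced by $\homa_\ell(\Phi,1)$, while $\homa_r(1,\Psi)\homa_\ell(\Phi,1)$ is the image of $\widetilde\Phi$ under part (2) post-composed with functoriality in $\homa_r(1,\Psi)$; a direct unwinding of the two defining diagrams (the one for $\homa_r$ uses $\Delta_C$ then $F^\S$ then $x$, the one for $\homa_\ell$ uses $y$ then $F^\S$ then $\gamma_A$) should make both composites equal to one and the same explicit family of maps $\bari\homa(C,A)\to\homa(C',A')$, namely $\mu_n^\vee\otimes F\mapsto\widetilde\Psi\circ(\id_\C\otimes\cdots)\circ\widetilde\Phi$ assembled with the appropriate projections — except for the order in which the two Maurer--Cartan data are inserted, which is precisely where the discrepancy noted in \cite[Sect. 6]{rnw17} lives.

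Once both composites are identified as Maurer--Cartan elements $z_1,z_2\in\homi\bigl(\bari\homa(C,A),\homa(C',A')\bigr)$, the core of the argument is to gauge them together. Here I would use the Koszulness hypothesis: when $\alpha$ is Koszul, the relevant twisting morphisms are acyclic, so the bar--cobar resolutions $\cobara\bara A\to A$ and $C\to\bara\cobara C$ (equivalently the co/bar adjunction counit/unit) are weak equivalences, and one has good control over homotopies of $\infty_\alpha$-morphisms via \cite[Sect. 3.2]{val14} and the model structures recalled in the conventions. The natural gauge should come from the fact that the two ways of inserting $\widetilde\Phi$ and $\widetilde\Psi$ differ by an ``interchange'' that is trivial after passing to the Koszul resolution — i.e. I would construct an explicit element of degree $-1$ (or an explicit path of Maurer--Cartan elements) in $\homi\bigl(\bari\homa(C,A),\homa(C',A')\bigr)$ interpolating between $z_1$ and $z_2$, built from a chain homotopy witnessing acyclicity of $\alpha$, and verify the gauge equation $\dot z = dz + \sum\frac1{n!}\ell_{n+1}(\lambda,z,\dots,z)$. \textbf{The main obstacle} is exactly this last step: producing the gauge element and checking that its flow connects $z_1$ to $z_2$ is the one genuinely new computation, and it is where the Koszul assumption must be used in an essential way (the appendix shows it fails otherwise). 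I would try to package this by factoring both $z_1$ and $z_2$ through a common zig-zag of strict $\SLoo$-morphisms built from the Koszul resolutions, so that the gauge becomes a formal consequence of a homotopy between two strict maps that agree on the nose after resolution; failing that, a direct filtration/obstruction-theoretic argument on $\homa(C,A)$ (using conilpotence of $C$) degree by degree should produce the gauge inductively.
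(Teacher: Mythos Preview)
Your framing is correct: both composites are Maurer--Cartan elements of the deformation complex $\homi\bigl(\bari\homa(C,A),\homa(C',A')\bigr)$, and by \cref{thm:MC and gauges of convolution homotopy algebras} it suffices to show they are gauge equivalent. Where your plan stalls is exactly where you flag it: producing the gauge. Your primary suggestion --- building it from ``a chain homotopy witnessing acyclicity of $\alpha$'' and checking the gauge ODE by hand, or running an obstruction argument along the coradical filtration --- is not how the paper proceeds, and would be very hard to carry out; Koszulness of $\alpha$ is not the existence of some contracting homotopy on $\alpha$ itself but the statement that the counit $\epsilon_A:\cobara\bara A\to A$ is a quasi-isomorphism, and there is no obvious way to turn that into an explicit degree $-1$ element of the deformation complex.

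The paper's argument is the clean version of your option~(b), with one ingredient you are missing: the Dolgushev--Rogers theorem. One \emph{rectifies} $\Psi$ to the strict morphism $R(\Psi)\coloneqq\cobara\Psi:R(A)\to R(A')$, where $R(A)=\cobara\bara A$. Since $R(\Psi)$ is strict, the square built from $\homa_\ell(\Phi,1)$ and $\homa(1,R(\Psi))$ commutes on the nose. The counits $\epsilon_A,\epsilon_{A'}$ glue this inner commuting square to the outer (non-commuting) one, and because $\alpha$ is Koszul they are quasi-isomorphisms, so $\homa(1,\epsilon_A)$ is a \emph{filtered} quasi-isomorphism. Now one does not try to pull a gauge back by hand; instead one pushes forward: the strict $\SLoo$-map
\[
\homi\bigl(\bari\homa(1,\epsilon_A),1\bigr):\homi\bigl(\bari\homa(C,A),\homa(C',A')\bigr)\longrightarrow\homi\bigl(\bari\homa(C,R(A)),\homa(C',A')\bigr)
\]
is a filtered quasi-isomorphism, and on Maurer--Cartan elements it is precomposition by $\homa(1,\epsilon_A)$. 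Chasing the diagram shows that both of your $z_1,z_2$ are sent to the \emph{same} element. The Dolgushev--Rogers theorem then says a filtered quasi-isomorphism of $\SLoo$-algebras induces a bijection on gauge equivalence classes of Maurer--Cartan elements, so $z_1$ and $z_2$ were already gauge equivalent. No explicit gauge is ever written down.

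So the gap in your proposal is not the setup but the mechanism: replace ``construct a gauge'' by ``map via a filtered quasi-isomorphism to a place where the two elements coincide, then invoke Dolgushev--Rogers to reflect gauge equivalence back''. Your zig-zag idea was pointed in the right direction; what was missing was naming the rectification $R(\Psi)=\cobara\Psi$ and the transfer theorem that does the heavy lifting.
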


The proof of this result is postponed to \cref{subsect:proof of compositions are homotopic}.

\begin{remark}
	The assumption that $\alpha$ is Koszul in this result cannot be removed --- although we do not exclude that it might be weakened. Counterexamples to the conclusion of \cref{thm:compositions are homotopic} for $\alpha$ not Koszul can easily be constructed, e.g. by taking $\alpha$ to be the zero twisting morphism. We will give such a counterexample in \cref{appendix:counterexample}.
\end{remark}

An immediate corollary is the following result.

\begin{corollary}
	Let $\alpha:\C\to\P$ be a Koszul twisting morphism. The bifunctor
	\[
	\homa:(\Ccog)^\mathrm{op}\times\Palg\longrightarrow\Lalg\ ,
	\]
	extends to a bifunctor
	\[
	\Ho(\homa):\Ho(\infty_\alpha\text{-}\Ccog)^\mathrm{op}\times\Ho(\infty_\alpha\text{-}\Palg)\longrightarrow\Ho(\infty\text{-}\Lalg)
	\]
	between the respective homotopy categories which restricts to $\homa_\ell$ and $\homa_r$ on the evident subcategories.
\end{corollary}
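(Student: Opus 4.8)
The plan is to write down $\Ho(\homa)$ explicitly on objects and morphisms and then verify it is well defined and functorial, the only non-formal input being \cref{thm:compositions are homotopic}. On objects, set $\Ho(\homa)(C,A)\coloneqq\homa(C,A)$. A morphism from $(C,A)$ to $(C',A')$ in $\Ho(\infty_\alpha\text{-}\Ccog)^{\mathrm{op}}\times\Ho(\infty_\alpha\text{-}\Palg)$ is represented by a pair consisting of an $\infty_\alpha$-morphism $\Phi\colon C'\rightsquigarrow C$ of $\C$-coalgebras and an $\infty_\alpha$-morphism $\Psi\colon A\rightsquigarrow A'$ of $\P$-algebras, each taken up to homotopy; here we use that, $\alpha$ being Koszul, the morphisms of these homotopy categories are exactly homotopy classes of $\infty_\alpha$-morphisms, since the latter are maps between bifibrant objects (as recalled in \cref{sect:recollections and deformation complex}). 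I would then define
\[
\Ho(\homa)([\Phi],[\Psi])\coloneqq\big[\homa_\ell(\Phi,1)\,\homa_r(1,\Psi)\big]=\big[\homa_r(1,\Psi)\,\homa_\ell(\Phi,1)\big]\ ,
\]
a homotopy class of $\infty$-morphisms of $\SLoo$-algebras from $\homa(C,A)$ to $\homa(C',A')$. The equality of the two bracketed composites is precisely \cref{thm:compositions are homotopic}, and this is the only point where the Koszul hypothesis enters.

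Three checks remain. \emph{Well-definedness:} if $\Phi\simeq\Phi'$ and $\Psi\simeq\Psi'$, then $\homa_\ell(\Phi,1)\simeq\homa_\ell(\Phi',1)$ and $\homa_r(1,\Psi)\simeq\homa_r(1,\Psi')$ as $\infty$-morphisms of $\SLoo$-algebras, by the corollary above asserting that homotopic $\infty_\alpha$-morphisms are sent by $\homa_r(1,-)$ and $\homa_\ell(-,1)$ to homotopic $\infty$-morphisms; since homotopy of $\infty$-morphisms of $\SLoo$-algebras is a congruence — composition descending to the homotopy category because these are maps between bifibrant objects — the composite classes are unchanged. \emph{Identities:} the identity $\infty_\alpha$-morphism is the strict identity, and $\homa_r(1,-)$, $\homa_\ell(-,1)$ restrict on strict morphisms to $\homa(1,-)$, $\homa(-,1)$ respectively, so $\homa_\ell(\id_C,1)\homa_r(1,\id_A)=\homa(\id_C,\id_A)=\id_{\homa(C,A)}$ and $\Ho(\homa)$ preserves identities. \emph{Composition:} given a further morphism represented by $(\Phi',\Psi')$ from $(C',A')$ to $(C'',A'')$, its composite with $(\Phi,\Psi)$ in the product category is $(\Phi\Phi',\Psi'\Psi)$; using \cref{prop:hom and compositions} — which yields the strict identities $\homa_\ell(\Phi',1)\homa_\ell(\Phi,1)=\homa_\ell(\Phi\Phi',1)$ and $\homa_r(1,\Psi')\homa_r(1,\Psi)=\homa_r(1,\Psi'\Psi)$ — together with \cref{thm:compositions are homotopic} to interchange, up to homotopy, the factors $\homa_r(1,\Psi')$ and $\homa_\ell(\Phi\Phi',1)$, one computes that
\[
\big[\homa_r(1,\Psi')\,\homa_\ell(\Phi',1)\big]\big[\homa_\ell(\Phi,1)\,\homa_r(1,\Psi)\big]=\big[\homa_\ell(\Phi\Phi',1)\,\homa_r(1,\Psi'\Psi)\big]=\Ho(\homa)([\Phi\Phi'],[\Psi'\Psi])\ .
\]
Finally, putting $\Phi=\id_C$ gives $\Ho(\homa)(\id_C,[\Psi])=[\homa_r(1,\Psi)]$ and $\Psi=\id_A$ gives $\Ho(\homa)([\Phi],\id_A)=[\homa_\ell(\Phi,1)]$, so $\Ho(\homa)$ restricts to $\homa_r$ and $\homa_\ell$ on the evident subcategories, as claimed.

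The main obstacle is not in this corollary but in \cref{thm:compositions are homotopic}, which has already been settled: it is exactly what makes the defining formula unambiguous (the two orders of composition agree) and what permits the interchange of factors needed for the composition axiom. What is left is careful bookkeeping, and the one thing to keep track of throughout is that at each step one is entitled to pass freely between the composites $\homa_\ell(\Phi,1)\homa_r(1,\Psi)$ and $\homa_r(1,\Psi)\homa_\ell(\Phi,1)$, which is guaranteed by that theorem together with the congruence property of the homotopy relation.
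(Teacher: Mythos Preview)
Your proposal is correct and is exactly the argument the paper leaves implicit: the paper states this result as an ``immediate corollary'' of \cref{thm:compositions are homotopic} without proof, and your write-up is the natural unpacking of that claim. The only ingredients are the well-definedness of $\homa_r(1,-)$ and $\homa_\ell(-,1)$ on homotopy classes, their functoriality (from \cref{prop:hom and compositions}), and the interchange up to homotopy from \cref{thm:compositions are homotopic}, all of which you invoke correctly.
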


\section{Proof of \texorpdfstring{\cref{thm:MC and gauges of convolution homotopy algebras,thm:fundamental thm of convolution algebras,thm:compositions are homotopic}}{Theorems 2.4, 3.1 and 3.6}}\label{sect:proofs}

This section collects all of the proofs we have postponed in the rest of the article in order to improve readability.

\subsection{Proof of \cref{thm:MC and gauges of convolution homotopy algebras}}\label{subsect:proof of MC and gauges of convolution homotopy algebras}

We will prove the following theorem, which we find of independent interest. To prove \cref{thm:MC and gauges of convolution homotopy algebras} we will only need the case of the $0$th homotopy group, but proving the whole result does not require a much bigger amount of work. We denote by $\Omega_\bullet$ the simplicial commutative algebra given by the polynomial de Rham forms on the simplices, and for a $\SLoo$-algebra $\g$ be denote by
\[
\MC_\bullet(\g)\coloneqq\MC(\g\otimes\Omega_\bullet)
\]
the simplicial set of Maurer--Cartan elements. Notice that this definition does not make sense if one does not assume some kind of ``finiteness'' condition on $\g$, since the Maurer--Cartan equation is an infinite sum of elements in a chain complex. The appropriate condition in this context is requiring that $\g$ carries a sensible filtration with respect to which it is complete, see e.g. \cite[Def. 8.1]{rnw17}. Since in the present paper we work exclusively with conilpotent coalgebras, all the convolution algebras appearing automatically satisfy this condition, as we will see shortly.

\begin{theorem}\label{thm:comparison of deformation complexes}
	Let $\alpha:\C\to\P$ be a twisting morphism, let $C$ be a $\C$-coalgebra, and let $A$ be a $\P$-algebra. Then we have a natural homotopy equivalence of simplicial sets
	\[
	\MC_\bullet(\homa(C,A))\simeq\MC(\homa(C,A\otimes\Omega_\bullet))\ .
	\]
	induced by the canonical inclusion
	\[
	\Theta:\hom(C,A)\otimes \Omega_\bullet\stackrel{\cong}{\longrightarrow}\hom(C,A\otimes \Omega_\bullet)
	\]
	given on pure tensors by sending $\phi\otimes\omega$ with $\phi\in\hom(C,A)$ and $\omega\in\Omega_\bullet$ to
	\[
	\Theta(\phi\otimes\omega)=\big(c\in C\longmapsto\phi(c)\otimes\omega\big)\ .
	\]
\end{theorem}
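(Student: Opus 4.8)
The plan is to first check that the map $\Theta$ is well defined and is actually an isomorphism of chain complexes, compatible with the relevant simplicial structures in $\bullet$, and then to upgrade it to a statement about Maurer--Cartan spaces. First I would verify that $\Theta$ is a morphism of $\SLoo$-algebras: on the left we have the convolution $\SLoo$-structure on $\hom(C,A)$ tensored with the commutative algebra $\Omega_n$ (which makes sense because tensoring a homotopy Lie algebra with a commutative algebra again gives a homotopy Lie algebra), and on the right we have the convolution $\SLoo$-structure on $\hom(C, A\otimes\Omega_n)$ coming from the $\P$-algebra structure on $A\otimes\Omega_n$. Since the brackets $\ell_n$ on a convolution algebra are built out of $\Delta_C$, the operadic structure map, and the multiplication on the target, and since the $\P$-algebra structure on $A\otimes\Omega_n$ is the one obtained by ``spreading'' the structure on $A$ using the commutative multiplication on $\Omega_n$, a direct inspection of the formula for $\ell_n$ given in the excerpt shows that $\Theta$ intertwines the two structures; the only subtlety is keeping track of Koszul signs, which is routine. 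The same computation shows $\Theta$ commutes with the simplicial maps induced by the simplicial structure of $\Omega_\bullet$, hence induces a map of simplicial sets $\MC(\hom(C,A)\otimes\Omega_\bullet)\to\MC(\hom(C,A\otimes\Omega_\bullet))$.

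Next I would argue that $\Theta$ is an isomorphism of chain complexes. This is where conilpotence of $C$ (and the corresponding completeness of the convolution algebra) is essential: without a finiteness hypothesis the natural map $\hom(C,A)\otimes\Omega_n \to \hom(C,A\otimes\Omega_n)$ is only injective, not surjective. I would use the fact that $C$, being conilpotent, is the colimit of its coradical filtration, so that $\hom(C,-)$ sends the relevant colimit to a limit; since $\Omega_n$ is a finite-dimensional complex in each degree after filtering appropriately — or more cleanly, since we may reduce to the case where $C$ is replaced by each finite stage of its filtration and pass to the limit — the canonical map becomes an isomorphism. Concretely, it suffices to observe that for a \emph{finite-dimensional} chain complex $V$ (here $\Omega_n$ in each simplicial degree, filtered by form-degree, or rather the relevant sub-objects), $\hom(C,A)\otimes V \cong \hom(C, A\otimes V)$ naturally, and then extend by completeness. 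I would spell this out using the complete filtration on $\homa(C,A)$ coming from the coradical filtration of $C$, noting that $\Theta$ is filtered and induces isomorphisms on associated graded pieces.

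Finally, having established that $\Theta$ is a filtered isomorphism of $\SLoo$-algebras commuting with the cosimplicial structure, I would conclude. Strictly speaking $\MC_\bullet(\homa(C,A)) = \MC(\homa(C,A)\otimes\Omega_\bullet)$ by definition, and under $\Theta$ this is carried isomorphically onto $\MC(\homa(C,A\otimes\Omega_\bullet))$; so in fact $\Theta$ induces an \emph{isomorphism} of simplicial sets, which in particular is a homotopy equivalence, giving the claimed statement. I would remark that the stated conclusion is phrased as a homotopy equivalence only because that is all that is needed downstream (e.g. to identify the deformation complex's Maurer--Cartan $\infty$-groupoid with a mapping space), but the proof actually yields the stronger isomorphism. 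The main obstacle is the second step: one has to be careful that the passage to the limit over the coradical filtration genuinely produces the surjectivity of $\Theta$, since this is exactly the point at which naive infinite-dimensional linear algebra fails, and one must use conilpotence in an essential way rather than merely invoke it.
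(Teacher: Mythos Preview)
Your proposal has a genuine gap at its central step: the claim that $\Theta$ is an isomorphism of chain complexes is false in general. The algebra $\Omega_n$ of polynomial de Rham forms on the $n$-simplex is infinite-dimensional over $\k$ (it contains all polynomials in the barycentric coordinates), so your parenthetical that $\Omega_n$ can be treated as ``a finite-dimensional chain complex $V$'' is simply wrong. Whenever $C$ is infinite-dimensional---and conilpotent coalgebras typically are, e.g.\ any cofree one on a nonzero space---the map $\Theta$ is a strict inclusion. Concretely, a linear map $C\to A\otimes\Omega_n$ sending a countable linearly independent family in $C$ to elements $a_k\otimes t^k$ of unbounded polynomial degree cannot lie in the image of $\Theta$, since anything in $\hom(C,A)\otimes\Omega_n$ is a finite sum $\sum_i\phi_i\otimes\omega_i$ and hence takes values in $A\otimes\operatorname{span}\{\omega_i\}$.

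Your proposed repair via the coradical filtration does not help. First, the stages $\F_k^{\mathrm{corad}}C$ need not be finite-dimensional. Second, even when they are, you would be comparing $\bigl(\varprojlim_k\hom(\F_kC,A)\bigr)\otimes\Omega_n$ with $\varprojlim_k\bigl(\hom(\F_kC,A)\otimes\Omega_n\bigr)$, and tensor products do not commute with inverse limits. Completeness of the convolution algebra is a statement about its filtration, not a finiteness condition that lets you drag $\Omega_n$ through a limit. The paper flags exactly this obstruction in the paragraph following the statement: the ``easy way'' of declaring $\Theta$ an isomorphism fails unless $C$ is finite-dimensional.

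The paper's actual argument circumvents the obstacle by replacing $\Omega_\bullet$ with Dupont's simplicial sub-complex $C_\bullet$, which \emph{is} finite-dimensional in each simplicial degree. One transfers the $\SLoo$-structures along the Dupont contraction on both sides, obtains a genuine isomorphism $\hom(C,A)\otimes C_\bullet\cong\hom(C,A\otimes C_\bullet)$ of the transferred algebras (this is where finite-dimensionality is used legitimately), and then invokes a Dolgushev--Rogers-type argument to show that passing between $\Omega_\bullet$ and $C_\bullet$ induces homotopy equivalences of Maurer--Cartan simplicial sets on each side. The conclusion is therefore genuinely only a homotopy equivalence, not an isomorphism; this is the actual content of the theorem, not an understatement. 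Your first step---that $\Theta$ is a strict morphism of $\SLoo$-algebras compatible with the simplicial structure---is correct and is indeed used in the paper's proof, but it is not where the work lies.
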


Here, the $\SLoo$-algebra $\homa(C,A)$ is filtered by
\[
\F_n\homa(C,A)\coloneqq\left\{\phi\in\hom(C,A)\ \vline\ \F_n^\mathrm{corad}C\subseteq\ker\phi\right\},
\]
where $\F_\bullet^\mathrm{corad}C$ is the coradical filtration of $C$, see e.g. \cite[Sect. 5.8.4]{lodayvallette}, which is exhaustive since we supposed that all coalgebras are conilpotent. This makes $\homa(C,A)$ into a complete $\SLoo$-algebra, so that it makes sense to define $\MC_\bullet(\homa(C,A))$. A similar filtration is put on $\homa(C,A\otimes\Omega_n)$, for all $n\ge0$. Complete $\SLoo$-algebra were called \emph{filtered} $\SLoo$-algebras in \cite[Def. 8.1]{rnw17}.

\medskip

One would like to go the easy way, and to prove the statement simply by saying that $\homa(C,A\otimes\Omega_\bullet)$ is isomorphic to $\homa(C,A)\otimes\Omega_\bullet$. However, since $\Omega_{\bullet}$ is infinite dimensional this is not true unless $C$ is finite dimensional. We can work around this problem as follows. There is a contraction due to Dupont \cite{dup76}
\begin{center}
	\begin{tikzpicture}
	\node (a) at (0,0){$\Omega_\bullet$};
	\node (b) at (2,0){$C_\bullet$};
	
	\draw[->] (a)++(.3,.1)--node[above]{\mbox{\tiny{$p_\bullet$}}}+(1.4,0);
	\draw[<-,yshift=-1mm] (a)++(.3,-.1)--node[below]{\mbox{\tiny{$i_\bullet$}}}+(1.4,0);
	\draw[->] (a) to [out=-150,in=150,looseness=4] node[left]{\mbox{\tiny{$h_\bullet$}}} (a);
	\end{tikzpicture}
\end{center}
from $\Omega_\bullet$ to a simplicial sub-complex $C_\bullet$. This sub-complex can be thought of as the dual of the cellular complex of the geometric simplices. In particular, it is finite dimensional in every simplicial degree. The reader desiring more detail should consult the original article \cite{dup76}, or e.g. \cite[Sect. 3]{get09}. Then, we proceed as follows:
\begin{enumerate}
	\item\label{pt:1 proof we thm} We transfer the simplicial $\SLoo$-algebra structure from $\homa(C,A\otimes\Omega_\bullet)$ to $\hom(C,A\otimes C_\bullet)$ using the Dupont contraction, and prove that the simplicial sets obtained from these algebras by taking the Maurer--Cartan elements are homotopy equivalent.
	\item\label{pt:2 proof we thm} We prove that the simplicial $\SLoo$-algebra $\hom(C,A\otimes C_\bullet)$ thus obtained is isomorphic to the simplicial $\SLoo$-algebra $\hom(C,A)\otimes C_\bullet$ similarly obtained from $\homa(C,A)\otimes\Omega_\bullet$ by homotopy transfer theorem.
	\item\label{pt:3 proof we thm} We conclude by using some results of \cite{rn17cosimplicial} to prove that the simplicial sets of Maurer--Cartan elements of $\hom(C,A)\otimes C_\bullet$ and of $\homa(C,A)\otimes\Omega_\bullet$ are homotopy equivalent.
\end{enumerate}
The proof of point (\ref{pt:1 proof we thm}) uses methods very similar to the ones used to prove \cite[Thm. 3.3]{rn17cosimplicial}, whose demonstration is itself inspired from the proof of the Dolgushev--Rogers theorem \cite{dr15}, and which implies point (\ref{pt:3 proof we thm}).

\medskip

Now to make the details more precise. The Dupont contraction induces a contraction
\begin{center}
	\begin{tikzpicture}
	\node (a) at (0,0){$\homa(C,A\otimes\Omega_\bullet)$};
	\node (b) at (5,0){$\hom(C,A\otimes C_\bullet)$};
	
	\draw[->] (a)++(1.4,.1)--node[above]{\mbox{\tiny{$(1_A\otimes p_\bullet)_*$}}}+(2.3,0);
	\draw[<-,yshift=-1mm] (a)++(1.4,-.1)--node[below]{\mbox{\tiny{$(1_A\otimes i_\bullet)_*$}}}+(2.3,0);
	\draw[->] (-1.4,-.1) to [out=-150,in=150,looseness=10] node[left]{\mbox{\tiny{$(1_A\otimes h_\bullet)_*$}}} (-1.4,.1);
	\end{tikzpicture}
\end{center}
By the homotopy transfer theorem --- e.g. \cite[Sect. 10.3]{lodayvallette} --- we can endow $\hom(C,A\otimes C_\bullet)$ with a $\SLoo$-algebra structure, and extend $(1_A\otimes p_\bullet)_*$ and $(1_A\otimes i_\bullet)_*$ to simplicial $\infty_\iota$-morphisms of $\SLoo$-algebra, where as usual $\iota:\com^\vee\to\SLoo$ is the natural twisting morphism. We denote by
\[
P_\bullet:\MC(\homa(C,A\otimes\Omega_\bullet))\longrightarrow\MC_\bullet(\homa(C,A))
\]
and
\[
I_\bullet:\MC_\bullet(\homa(C,A))\longrightarrow\MC(\homa(C,A\otimes\Omega_\bullet))
\]
the morphisms of simplicial sets induced by these $\infty$-morphisms on the Maurer--Cartan sets. The following result implies point (\ref{pt:1 proof we thm}).

\begin{proposition}
	The morphisms of simplicial sets $P_\bullet$ and $I_\bullet$ are homotopy inverses  to each other.
\end{proposition}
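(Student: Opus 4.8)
The plan is to show that one of the two composites is literally the identity while the other is only simplicially homotopic to it. The first statement is formal; the second is the technical heart, and follows the pattern of the proof of \cite[Thm.~3.3]{rn17cosimplicial}, itself a simplicial refinement of the Dolgushev--Rogers argument \cite{dr15}.

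First I would dispose of $P_\bullet I_\bullet$. The Dupont contraction $(i_\bullet,p_\bullet,h_\bullet)$ satisfies the side conditions $h_\bullet^2=0$, $h_\bullet i_\bullet=0$ and $p_\bullet h_\bullet=0$ (see \cite{dup76,get09}; in any event these can always be arranged), and hence so does the contraction it induces between $\homa(C,A\otimes\Omega_\bullet)$ and $\hom(C,A\otimes C_\bullet)$. Under these side conditions the tree-sum formulas of the homotopy transfer theorem \cite[Sect.~10.3]{lodayvallette} telescope, so that the composite $\infty_\iota$-morphism $(1_A\otimes p_\bullet)_*\circ(1_A\otimes i_\bullet)_*$ is the identity: its linear part is $(1_A\otimes p_\bullet i_\bullet)_*=\id$ because $p_\bullet i_\bullet=\id$, and each of its components of arity $\ge 2$ factors through one of $h_\bullet i_\bullet$, $p_\bullet h_\bullet$ or $h_\bullet^2$, hence vanishes. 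Reading this off on Maurer--Cartan sets in each simplicial degree gives $P_\bullet I_\bullet=\id$, so it remains to produce a simplicial homotopy $I_\bullet P_\bullet\simeq\id$.

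For this I would run the filtered homotopy transfer argument. The convolution algebra $\homa(C,A\otimes\Omega_\bullet)$ carries the exhaustive, complete filtration $\F_\bullet$ induced by the coradical filtration of $C$ --- this is exactly where conilpotency of $C$ is used --- and this filtration is preserved by $(1_A\otimes p_\bullet)_*$, $(1_A\otimes i_\bullet)_*$ and $(1_A\otimes h_\bullet)_*$, as these act only on the coefficient algebra $A\otimes\Omega_\bullet$. On the associated graded only the linear parts of the transferred data survive, and there $i_\bullet p_\bullet$ is chain homotopic to the identity through $h_\bullet$. One then constructs, by induction on the filtration degree, for each Maurer--Cartan element --- more precisely, for each simplex's worth of compatible Maurer--Cartan elements --- a gauge connecting it to its image under the $\infty_\iota$-morphism $(1_A\otimes i_\bullet)_*\circ(1_A\otimes p_\bullet)_*$, solving the obstruction at each stage by applying $h_\bullet$; completeness with respect to $\F_\bullet$ guarantees convergence. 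Assembling these gauges yields a map $H\colon\MC(\homa(C,A\otimes\Omega_\bullet))\times\Delta^1\to\MC(\homa(C,A\otimes\Omega_\bullet))$, and because the Dupont contraction is compatible with the simplicial structure in the sense needed (cf.~\cite{get09,rn17cosimplicial}), the transferred structures, the $\infty$-morphisms and $H$ are all simplicial; thus $H$ is the sought homotopy.

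The main obstacle is the construction and, above all, the simplicial naturality of $H$: one must check that the inductively built gauges are compatible with every face and degeneracy map, which is exactly the point where the fine compatibility of the Dupont contraction with the simplicial structure is invoked, just as in \cite[Thm.~3.3]{rn17cosimplicial}. The remaining ingredients are routine: the side-condition computation giving $P_\bullet I_\bullet=\id$, and the obstruction-theoretic construction of the connecting gauge together with its convergence, which is the standard filtered homotopy transfer / Dolgushev--Rogers argument \cite{dr15}. Finally, the extra coalgebra variable $C$ plays no active role --- its only function is to supply, through its coradical filtration, the completeness that makes the whole argument go through --- and one may moreover check that the simplicial sets $\MC(\homa(C,A\otimes\Omega_\bullet))$ are Kan complexes, by a Getzler-type argument \cite{get09}, so that ``homotopy inverse'' carries its naive meaning; but since both composites are produced explicitly, this is not logically necessary.
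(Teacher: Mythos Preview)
Your treatment of $P_\bullet I_\bullet=\id$ is fine and coincides with the paper's: the side conditions on the Dupont contraction kill all higher components of the composite $\infty_\iota$-morphism, exactly as in \cite[Lemma~3.5]{rn17cosimplicial}.

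For the other composite, however, your strategy diverges from the paper's, and the divergence hides a gap. The paper does \emph{not} construct an explicit simplicial homotopy $I_\bullet P_\bullet\simeq\id$. Instead it shows that $R_\bullet\coloneqq I_\bullet P_\bullet$ is a \emph{weak equivalence} of simplicial sets, by the Dolgushev--Rogers tower argument: one filters by the coradical filtration of $C$, checks the abelian base case $C=\F_2^{\mathrm{corad}}C$ directly, and then climbs the tower of principal fibrations using the analogues of \cite[Lemmas~3.8 and~3.10]{rn17cosimplicial}. Together with $P_\bullet I_\bullet=\id$ this forces both $P_\bullet$ and $I_\bullet$ to be weak equivalences, hence homotopy inverses (the Maurer--Cartan simplicial sets being Kan).

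Your proposal instead tries to build, for each $n$-simplex $x\in\MC(\homa(C,A\otimes\Omega_n))$, a gauge from $x$ to $R_n(x)$ using $h_\bullet$, and then to ``assemble'' these gauges into a map $H:\MC(\homa(C,A\otimes\Omega_\bullet))\times\Delta^1\to\MC(\homa(C,A\otimes\Omega_\bullet))$. There are two problems. First, a gauge at level $n$ lives in $\MC(\homa(C,A\otimes\Omega_n)\otimes\Omega_1)$, not in $\MC(\homa(C,A\otimes\Omega_{n+1}))$; it is not clear how such data organize into the prism simplices required of a simplicial homotopy. Second, and more seriously, the Dupont homotopy $h_\bullet$ is \emph{not} a simplicial map --- it fails to commute with faces and degeneracies --- so gauges manufactured from $h_\bullet$ at each level have no reason to be compatible with the simplicial structure. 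You flag this as ``the main obstacle'' and defer to \cite[Thm.~3.3]{rn17cosimplicial}, but that reference resolves the issue precisely by abandoning the explicit-homotopy route and proving a weak equivalence via the tower argument instead. So the citation does not support the step you need; it supports the paper's alternative approach.

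In short: keep your argument for $P_\bullet I_\bullet=\id$, but for $I_\bullet P_\bullet$ replace the gauge-assembly sketch by the filtration/tower weak-equivalence argument that \cite{rn17cosimplicial} and the paper actually run. Your closing remark that the $\MC$ simplicial sets are Kan is then not optional padding but the final step needed to upgrade ``weak equivalence'' to ``homotopy inverse''.
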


\begin{proof}
	The proof of \cite[Thm. 3.3]{rn17cosimplicial} goes through essentially unchanged by replacing $\g\otimes\Omega_\bullet$ by $\homa(C,A\otimes\Omega_\bullet)$, and $\g\otimes C_\bullet$ by $\hom(C,A\otimes C_\bullet)$. Therefore, we will only give a sketch of the proof here, and refer to \emph{op. cit.} for the details.
	
	\medskip
	
	First of all, the composite $P_\bullet I_\bullet$ is the identity, cf. \cite[Lemma 3.5]{rn17cosimplicial}. Therefore, it is enough to prove that the composite
	\[
	R_\bullet\coloneqq I_\bullet P_\bullet:\MC(\homa(C,A\otimes\Omega_\bullet))\longrightarrow\MC(\homa(C,A\otimes\Omega_\bullet))
	\]
	is a weak equivalence. This is done by an inductive procedure on the filtration, and then passing to the limit.
	
	\medskip
	
	The map $R_0$ is simply given by the identity, and thus induces a bijection at the level of the $0$th homotopy group. Then one considers the case where $C=\F_2^\mathrm{corad}C$, where $\homa(C,-)$ lands in abelian $\SLoo$-algebras, and proves that $R_\bullet$ is a weak equivalence of simplicial sets in that case, with the same methods as \cite[Lemma 3.7]{rn17cosimplicial}. Further, one notices that \cite[Lemmas 3.8 and 3.10]{rn17cosimplicial} also hold in this context, so that all of the arguments carry over to the present situation, giving us the result we wanted.
\end{proof}

The other contraction we consider is
\begin{center}
	\begin{tikzpicture}
	\node (a) at (0,0){$\homa(C,A)\otimes\Omega_\bullet$};
	\node (b) at (5,0){$\hom(C,A)\otimes C_\bullet$};
	
	\draw[->] (a)++(1.4,.1)--node[above]{\mbox{\tiny{$1_{\hom(C,A)}\otimes p_\bullet$}}}+(2.3,0);
	\draw[<-,yshift=-1mm] (a)++(1.4,-.1)--node[below]{\mbox{\tiny{$1_{\hom(C,A)}\otimes i_\bullet$}}}+(2.3,0);
	\draw[->] (-1.4,-.1) to [out=-150,in=150,looseness=10] node[left]{\mbox{\tiny{$1_{\hom(C,A)}\otimes h_\bullet$}}} (-1.4,.1);
	\end{tikzpicture}
\end{center}
Again, the homotopy transfer theorem gives us a $\SLoo$-algebra structure on $\hom(C,A)\otimes C_\bullet$. Since $C_\bullet$ is finite dimensional in every simplicial degree, we have a natural isomorphism of simplicial chain complexes
\[
\widetilde{\Theta}:\hom(C,A)\otimes C_\bullet\stackrel{\cong}{\longrightarrow}\hom(C,A\otimes C_\bullet)
\]
given on pure tensors by sending $\phi\otimes\omega$ with $\phi\in\hom(C,A)$ and $\omega\in C_\bullet$ to
\[
\widetilde{\Theta}(\phi\otimes\omega)=\big(c\in C\longmapsto\phi(c)\otimes\omega\big)\ .
\]
We check that $\widetilde{\Theta}$ respects the transferred $\SLoo$-structures.

\begin{proposition}\label{prop:isom of the two transfered structures}
	The map $\widetilde{\Theta}$ is an isomorphism of $\SLoo$-algebras with respect to the two transferred structures.
\end{proposition}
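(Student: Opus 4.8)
The plan is to reduce the statement to a direct comparison of the two transferred $\SLoo$-structures term by term using the explicit homotopy transfer formulas, and to observe that the transfer data on the two sides match under $\widetilde{\Theta}$ up to the coefficient combinatorics, which are identical on both sides. First I would recall that the transferred $\SLoo$-operations on a contraction $(i,p,h)$ of a $\SLoo$-algebra $\g$ are given by the standard sum over planar binary trees (with internal vertices decorated by the brackets $\ell_k$ of $\g$, leaves decorated by $i$, internal edges by $h$, and the root by $p$), see e.g. \cite[Sect. 10.3]{lodayvallette}. So it suffices to show that, for each tree $T$, the corresponding summand for the contraction $(1_{\hom(C,A)}\otimes i_\bullet,\,1_{\hom(C,A)}\otimes p_\bullet,\,1_{\hom(C,A)}\otimes h_\bullet)$ on $\homa(C,A)\otimes\Omega_\bullet$, pushed along $\widetilde{\Theta}$, equals the corresponding summand for the contraction $((1_A\otimes i_\bullet)_*,(1_A\otimes p_\bullet)_*,(1_A\otimes h_\bullet)_*)$ on $\homa(C,A\otimes\Omega_\bullet)$.

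The key step is therefore purely algebraic: one must track what a single tree computation does on each side. On the $\homa(C,A)\otimes\Omega_\bullet$ side, the brackets are those of the tensor product $\SLoo$-algebra, i.e. $\ell_k$ of $\homa(C,A)$ tensored with the $k$-fold (commutative, associative) product of $\Omega_\bullet$, while $i_\bullet,p_\bullet,h_\bullet$ act only on the $\Omega_\bullet$-tensor-factor. On the $\homa(C,A\otimes\Omega_\bullet)$ side, the brackets $\ell_k$ are computed in $A\otimes\Omega_\bullet$ (which, since the operadic structure map $\gamma_{A\otimes\Omega_\bullet}$ factors as $\gamma_A$ on the $A$-part and the iterated product on the $\Omega_\bullet$-part because $\Omega_\bullet$ is commutative) again split into the $A$-part and the $\Omega_\bullet$-part; and $(1_A\otimes i_\bullet)_*$ etc. act only on the $\Omega_\bullet$-part of the target. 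The point is that $\widetilde{\Theta}$ is precisely the natural isomorphism $\hom(C,A)\otimes C_\bullet\cong\hom(C,A\otimes C_\bullet)$ that intertwines ``apply a $\hom(C,-)$-operation on the $A$-slot and a tensor operation on the $C_\bullet$-slot'' in the obvious way; so on each tree summand the $A$/$\hom(C,A)$-part of the computation is literally the same formula, and the $C_\bullet$-part (the composite of $i_\bullet$'s, $h_\bullet$'s, $p_\bullet$'s along the tree, together with the products reading off the shape of $T$) is also literally the same formula. Hence the summands agree tree by tree, and summing over all trees gives $\widetilde{\Theta}\circ\ell_k^{\mathrm{transf}} = \ell_k^{\mathrm{transf}}\circ\widetilde{\Theta}^{\otimes k}$ for every $k$, i.e. $\widetilde{\Theta}$ is a strict isomorphism of $\SLoo$-algebras.

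Two technical points need care. First, the compatibility of the Koszul signs: switching the $\Omega_\bullet$-factors past the $f_i\in\hom(C,A)$ in the $\homa(C,A)\otimes\Omega_\bullet$ picture must produce exactly the same signs as in the $\homa(C,A\otimes\Omega_\bullet)$ picture; this is a bookkeeping verification that the sign rule is consistent with $\widetilde{\Theta}$, and it works because $\widetilde{\Theta}$ is the canonical flip-free identification on pure tensors. Second, one uses crucially that $C_\bullet$ is finite dimensional in each simplicial degree, so that $\hom(C,A)\otimes C_\bullet \to \hom(C,A\otimes C_\bullet)$ is an \emph{isomorphism} and not merely an injection (this is exactly why we passed to the Dupont subcomplex $C_\bullet$ rather than working directly with $\Omega_\bullet$). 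I expect the main obstacle to be simply the careful sign and combinatorial bookkeeping in the tree formulas; there is no conceptual difficulty, since the transferred structure is functorial in the contraction and $\widetilde{\Theta}$ identifies the two contractions compatibly with all the data. One may shorten the write-up by invoking the naturality of the homotopy transfer theorem with respect to the functor $\hom(C,-)$ (which is exact and hence sends contractions to contractions), reducing the claim to the observation that $\widetilde{\Theta}$ identifies $\hom(C,-)$ applied to the Dupont contraction on $A\otimes\Omega_\bullet$ with the contraction on $\hom(C,A)\otimes\Omega_\bullet$ obtained by tensoring the Dupont contraction with $\id_{\hom(C,A)}$.
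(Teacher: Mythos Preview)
Your argument is correct, but it is not the route the paper takes. You proceed by unwinding the explicit tree formulas of the homotopy transfer theorem on both sides and matching them term by term under $\widetilde{\Theta}$; this works, because the contraction data and the $\SLoo$-brackets on the two sides are intertwined by the canonical inclusion $\Theta$, so each tree summand transports correctly. (One small slip: the trees in the transfer formula for $\SLoo$-algebras are not binary in general, since the vertices are decorated by the $\ell_k$ for all $k\ge2$; this does not affect your argument.)

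The paper instead avoids any explicit tree combinatorics. It observes that the canonical inclusion $\Theta:\homa(C,A)\otimes\Omega_\bullet\to\homa(C,A\otimes\Omega_\bullet)$ is a strict morphism of $\SLoo$-algebras which commutes with the homotopies of the two contractions, i.e.\ $\Theta\circ(1\otimes h_\bullet)=(1_A\otimes h_\bullet)_*\circ\Theta$. This makes $\Theta$ a morphism of complete contractions in the sense of \cite[Def.~1.7]{ban17}, and then \cite[Lemma~1.10]{ban17} gives directly that the projected map $\pr_2(\Theta)=(1_A\otimes p_\bullet)_*\circ\Theta\circ(1\otimes i_\bullet)$ is a strict morphism of $\SLoo$-algebras between the two transferred structures. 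A one-line computation using $p_\bullet i_\bullet=\id$ shows $\pr_2(\Theta)=\widetilde{\Theta}$.

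So the difference is: you do the combinatorics by hand (self-contained, but with the sign and tree bookkeeping you flagged), while the paper packages the same compatibility into the black-box statement ``homotopy transfer is natural in morphisms of contractions'' from \cite{ban17}. Your closing remark about invoking naturality of the HTT is in fact exactly the paper's strategy; had you followed that remark rather than the tree expansion, you would have reproduced the paper's proof almost verbatim.
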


\begin{proof}
	We consider the diagram
	\begin{center}
		\begin{tikzpicture}
		\node (a) at (0,0){$\homa(C,A)\otimes\Omega_\bullet$};
		\node (b) at (5,0){$\hom(C,A)\otimes C_\bullet$};
		
		\draw[->] (a)++(1.4,.1)--node[above]{\mbox{\tiny{$1_{\hom(C,A)}\otimes p_\bullet$}}}+(2.3,0);
		\draw[<-,yshift=-1mm] (a)++(1.4,-.1)--node[below]{\mbox{\tiny{$1_{\hom(C,A)}\otimes i_\bullet$}}}+(2.3,0);
		\draw[->] (-1.4,-.1) to [out=-150,in=150,looseness=10] node[left]{\mbox{\tiny{$1_{\hom(C,A)}\otimes h_\bullet$}}} (-1.4,.1);
		
		\node (c) at (0,-2.5){$\homa(C,A\otimes\Omega_\bullet)$};
		\node (d) at (5,-2.5){$\hom(C,A\otimes C_\bullet)$};
		
		\draw[->] (c)++(1.4,.1)--node[above]{\mbox{\tiny{$(1_A\otimes p_\bullet)_*$}}}+(2.3,0);
		\draw[<-,yshift=-1mm] (c)++(1.4,-.1)--node[below]{\mbox{\tiny{$(1_A\otimes i_\bullet)_*$}}}+(2.3,0);
		\draw[->] (-1.4,-2.6) to [out=-150,in=150,looseness=10] node[left]{\mbox{\tiny{$(1_A\otimes h_\bullet)_*$}}} (-1.4,-2.4);
		
		\draw[->] (a) to node[left]{$\Theta$} (c);
		\end{tikzpicture}
	\end{center}
	We have that $\Theta$ preserves the filtrations, and
	\[
	\Theta(1_{\hom(C,A)}\otimes h_\bullet) = (1_A\otimes h_\bullet)_*\Theta\ .
	\]
	Therefore, the morphism $\Theta$ is a morphism of complete contractions in the sense of \cite[Def. 1.7]{ban17}. Since $\Theta$ is a morphism of $\SLoo$-algebras, then by \cite[Lemma 1.10]{ban17} we have that
	\[
	\pr_2(\Theta)\coloneqq (1_{\hom(C,A)}\otimes p_\bullet)\Theta(1_A\otimes i_\bullet)_*
	\]
	is a morphism of $\SLoo$-algebras between the transfered $\SLoo$-algebras. But
	\begin{align*}
		(1_{\hom(C,A)}\otimes p_\bullet)\Theta(1_A\otimes i_\bullet)_*(\phi\otimes\omega) =&\ (1_{\hom(C,A)}\otimes p_\bullet)\Theta(\phi\otimes i(\omega))\\
		=&\ (1_{\hom(C,A)}\otimes p_\bullet)\big(c\mapsto\phi(c)\otimes i(\omega)\big)\\
		=&\ \big(c\mapsto\phi(c)\otimes pi(\omega)\big)\\
		=&\ \big(c\mapsto\phi(c)\otimes\omega\big)\\
		=&\ \widetilde{\Theta}(\phi\otimes\omega)
	\end{align*}
	for any $\phi\in\hom(C,A)$ and $\omega\in C_\bullet$. This concludes the proof.
\end{proof}

Now we can conclude the proof of the theorem.

\begin{proof}[Proof of \cref{thm:comparison of deformation complexes}]
	We have taken care of points (\ref{pt:1 proof we thm}) and (\ref{pt:2 proof we thm}) in our program. Point (\ref{pt:3 proof we thm}) is given by \cite[Thm. 3.3]{rn17cosimplicial}. Putting these steps together, we obtain the desired homotopy equivalence.
	
	\medskip
	
	To prove that $\MC(\Theta)$ is indeed a homotopy equivalence, we refine slightly the proof of \cref{prop:isom of the two transfered structures} and notice that the second part of \cite[Lemma 1.10]{ban17}, together with the fact that
	\[
	(1_{\hom(C,A)}\otimes p_\bullet)_\infty(1_{\hom(C,A)}\otimes i_\bullet)_\infty = 1_{\hom(C,A)\otimes C_\bullet}\ ,
	\]
	implies that
	\[
	P_\bullet\MC(\Theta)I_\bullet = \MC(\widetilde{\Theta})\ .
	\]
	Since all the morphisms except --- \emph{a priori} --- $\MC(\Theta)$ are homotopy equivalences, we conclude that $\MC(\Theta)$ must also be one, concluding the proof of the theorem.
\end{proof}

As a consequence, we have the following.

\begin{proof}[Proof of \cref{thm:MC and gauges of convolution homotopy algebras}]
	As mentioned in \cref{rem:prior versions of MC of convolution algebras}, the proof of the fact that Maurer--Cartan elements corresponds to morphisms of algebras and coalgebras was done in  \cite[Thm. 7.1(1)]{wie16} and in a special case in \cite[Thm. 6.3]{rn17}.
	
	\medskip
	
	We are left to prove points (\ref{pt:1 MC and gauges}) and (\ref{pt:2 MC and gauges}) of the statement. For this, notice that if $A$ is a $\P$-algebra, then $A\otimes\Omega_1$ is a good path object for $A$. This is seen by considering the two morphisms of $\P$-algebras
	\[
	A\longrightarrow A\otimes\Omega_1\longrightarrow A\times A\ ,
	\]
	the first one sending $a\in A$ to $a\otimes1\in A\otimes\Omega_1$ and the second one sending $a(t_0,t_1)\in A\otimes\Omega_1$ to $(a(1,0),a(0,1))\in A\times A$. Therefore, two morphisms of $\P$-algebras
	\[
	f_0,f_1:\cobara C\longrightarrow A
	\]
	are homotopic if and only if there exists a morphism of $\P$-algebras
	\[
	H:\cobara C\longrightarrow A\otimes\Omega_1
	\]
	such that $H$ equals $f_0$, respectively $f_1$, if we evaluate its image at $(1,0)$, resp. $(0,1)$. But this is exactly saying that
	\[
	H\in\homa(C,A\otimes\Omega_1)
	\]
	is a $1$-simplex going from $f_0\in\homa(C,A)$ to $f_1\in\homa(C,A)$, i.e. that $f_0$ and $f_1$ represent the same element of $\pi_0\MC(\homa(C,A))$. Finally, by \cref{thm:comparison of deformation complexes} this is equivalent to the fact that $f_0$ and $f_1$ are in the same path component of $\MC_\bullet(\homa(C,A))$, i.e. that they are gauge equivalent. This concludes the proof of point (\ref{pt:2 MC and gauges}).
	
	\medskip
	
	The proof of point (\ref{pt:1 MC and gauges}) is done analogously using the fact that $\bara(A\otimes\Omega_1)$ is a good path object for $\bara A$, which we show now. The bar functor $\bara$ is a right Quillen functor by \cite[Thm. 3.11(1)]{dch16}, or by \cite[Thm. 2.1.3]{val14} in the case where $\alpha$ is Koszul. Thus, it preserves fibrations, and by Ken Brown's lemma --- see e.g. \cite[Lemma 1.1.12]{Hovey} --- it also sends weak equivalences between fibrant objects to weak equivalences. But all $\P$-algebras are fibrant, and $\bara$ also preserves limits being right adjoint, so it follows that $\bara$ preserves good path objects. Therefore, $\bara(A\otimes\Omega_1)$ is a good path object for $\bara A$, and one concludes the proof of point (\ref{pt:1 MC and gauges}) proceeding in the same way as before.
\end{proof}

\subsection{Proof of \cref{thm:fundamental thm of convolution algebras}}\label{subsect: proof of fundamental thm of convolution algebras}

We begin by proving a technical result that we will need in the main proofs.

\begin{lemma}\label{lemma:technical lemma 2}
	Let $\C$ be a cooperad, and let $C$ be a conilpotent $\C$-coalgebra. Then
	\[
	(\Delta_{(1)}\circ1_C)\Delta_C^n = \sum_{\substack{n_1+n_2=n+1\\1\le i\le n_1}}\left(1_\C\circ\left(1_C^{\otimes(i-1)}\otimes\Delta_C^{n_2}\otimes1_C^{\otimes(n-i)}\right)\right)\Delta_C^{n_1}\ .
	\]
	Moreover, let $f_1,\ldots,f_n\in\hom(C,V)$ for a chain complex $V$. Under the canonical inclusion
	\[
	\bigoplus_{\substack{n_1+n_2 = n+1\\1\le i\le n_1}}\C(n_1)\circ\left(C^{\otimes(i-1)}\otimes\left(\C(n_2)\otimes C^{\otimes n_2}\right)\otimes C^{\otimes(n-i)}\right)\xhookrightarrow{\qquad}(\C\circ\C)(n)\otimes C^{\otimes n}
	\]
	we have
	\[
	F^\S(\Delta_{(1)}\circ1_C)\Delta_C^n = \sum_{\substack{S_1\sqcup S_2=[n]}}(-1)^\epsilon((F^{S_1}\Delta_C^{n_1})\otimes F^{S_2})\Delta_C^{n_2}\ ,
	\]
	where $n_1 = |S_1|$ and $n_2 = |S_2|+1$, and again $\epsilon$ is given by the Koszul sign rule because we are shuffling the maps $f_i$.
\end{lemma}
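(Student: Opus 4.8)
The statement is a purely combinatorial identity about iterated comultiplications in a conilpotent cooperad coalgebra, so the proof should proceed by unwinding definitions carefully and keeping track of signs. The plan is to prove the first identity by induction on $n$ (or, more slickly, by expressing both sides via the coassociativity of $\Delta_C$ and the cooperad structure), and then obtain the second identity as a consequence by applying $F^\S$ and carefully bookkeeping the Koszul signs.

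First I would recall that $\Delta_C^n\colon C\to(\C(n)\otimes C^{\otimes n})^{\S_n}$ is the arity-$n$ component of the total comultiplication $\Delta_C\colon C\to\C(C)$, and that the infinitesimal decomposition $\Delta_{(1)}\colon\C\to\C\circ_{(1)}\C$ together with the coassociativity of $\Delta_C$ (i.e. $(\Delta_{(1)}\circ 1_C)\Delta_C=(1_\C\circ'\Delta_C)\Delta_C$ in the appropriate sense) is what drives the identity. Concretely, $\Delta_{(1)}$ splits an arity-$n$ element of $\C$ into a ``one level'' composite where a single input of an arity-$n_1$ operation is fed an arity-$n_2$ operation with $n_1+n_2=n+1$; applying this after $\Delta_C^n$ and using that $\Delta_C$ is a morphism of $\C$-coalgebras (compatibility of $\Delta_C$ with the cooperad structure) produces exactly the sum over $n_1+n_2=n+1$ and $1\le i\le n_1$ of reapplying $\Delta_C^{n_2}$ in the $i$-th slot. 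I would make this rigorous either by direct manipulation of the structure maps using the coalgebra axiom $(\Delta_{(1)}\circ 1_C)\Delta_C = (1_\C\circ'\Delta_C)\Delta_C$ and then projecting onto the arity-$n$ summand, or by induction on $n$ starting from $\Delta_C^1=1_C$ and $\Delta_C^2$, using coassociativity to pass from $n$ to $n+1$.

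For the second identity, I would take the first one, compose on the left with $F^\S$ for $f_1,\dots,f_n\in\hom(C,V)$, and distribute. Each term in the first identity's right-hand side has $C^{\otimes(i-1)}\otimes(\C(n_2)\otimes C^{\otimes n_2})\otimes C^{\otimes(n-i)}$, i.e. $n-1$ outer copies of $C$ and $n_2$ inner copies, for a total of $n+n_2-1$ factors but only $n$ ``slots'' once we remember that the $n_2$ inner copies are themselves produced by a single $\Delta_C^{n_2}$ applied after one of the original $n$ inputs. The map $F^\S$ symmetrizes the placement of the $f_i$ over all $n$ copies of $C$ in the source $(\C(n)\otimes C^{\otimes n})^{\S_n}$; after the splitting, this symmetrization reorganizes into a choice of which subset $S_1\subseteq[n]$ of the $f_i$ lands on the $n_1=|S_1|$ outer legs of the arity-$n_1$ operation and which subset $S_2$ (of size $n_2-1=|S_2|$) lands on the legs fed through the $\Delta_C^{n_2}$, together with one more application of $\Delta_C$ absorbing the last leg — giving the sum over $S_1\sqcup S_2=[n]$ with the factor $((F^{S_1}\Delta_C^{n_1})\otimes F^{S_2})\Delta_C^{n_2}$. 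Here $F^{S}$ denotes $\bigotimes_{j\in S}f_j$ in increasing order of $S$, and $F^{S_1\sqcup S_2}$ versus $F^{S_1}\otimes F^{S_2}$ differ by the shuffle sign $(-1)^\epsilon$, which is precisely the sign recorded in the statement.

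\medskip

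\textbf{Main obstacle.} The conceptual content is light; the difficulty is entirely in the signs and in matching the combinatorics of the two symmetrizations. The hard part will be verifying that the Koszul sign $(-1)^\epsilon$ produced by rewriting the single symmetrization $F^\S$ (over all permutations of $[n]$) as a double sum — first choosing the shuffle $S_1\sqcup S_2=[n]$, then symmetrizing within each block — matches exactly the shuffle sign appearing on the right-hand side, with no leftover signs coming from commuting $f_i$ past the cooperad elements $\C(n_1),\C(n_2)$ or from the identification of invariants with coinvariants. I would handle this by working with a fixed choice of representatives (e.g. always listing indices in increasing order within each block) and checking the sign on a generating shuffle, then invoking the standard fact that $|\S_n|/(|\S_{n_1-1}|\cdot|\S_{n_2}|)$ — appropriately interpreted — decomposes the sum over $\S_n$ into (shuffles) $\times$ (internal permutations), with the internal permutations' symmetrizations being absorbed into the definitions of $F^{S_1}$ and $F^{S_2}$ and the $\S_n$-invariance of $\Delta_C^{n_1}$, $\Delta_C^{n_2}$. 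The rest is routine bookkeeping that I would relegate to a careful but unglamorous verification.
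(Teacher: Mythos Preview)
Your proposal is correct and follows essentially the same approach as the paper: use the coassociativity identity $(\Delta_\C\circ 1_C)\Delta_C = (1_\C\circ\Delta_C)\Delta_C$, project onto the infinitesimal summand $(\C\circ_{(1)}\C)(C)\cong \C\circ(C;\C(C))$ to obtain the first formula, and then deduce the second by applying $F^\S$ and tracking the shuffle signs. The paper's proof is extremely terse (it literally says ``we leave the details to the reader'' and ``follows in a straightforward way''), so your more detailed sign bookkeeping and your alternative inductive route are welcome elaborations rather than departures.
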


\begin{proof}
	For the first identity, one considers the equality
	\[
	(\Delta_\C\circ 1_C)\Delta_C = (1_\C\circ\Delta_C)\Delta_C
	\]
	and then projects on the subspace
	\[
	(\C\circ_{(1)}\C)(C)\cong \C\circ(C;\C(C))\ .
	\]
	We leave the details to the reader. The second statement then follows in a straightforward way.
\end{proof}

We can now prove the main result.

\begin{proof}[Proof of \cref{thm:fundamental thm of convolution algebras}]
	We prove the first case, the second one being dual. We will begin by proving that the map $\homa_r(1,-)$ commutes with the brackets, and then show that it also commutes with the differentials. All of this will be done by explicitly writing down and comparing the formul{\ae}.
	
	\medskip
	
	Let $x_1,\ldots,x_k\in\homa(\bara A,A')$. Then we have
	\[
	\ell_k(x_1,\ldots,x_k) = \gamma_{A'}(\alpha\otimes X)^\S(\Delta_\C^k\circ1_A)\ ,
	\]
	where $X\coloneqq x_1\otimes\cdots\otimes x_k$, and thus for $f_1,\ldots,f_n\in\homa(C,A)$ we get
	\begin{align*}
	\homa_r(1,\ell_k(X))(\mu_n^\vee&\otimes F) = \ell_k(X)F^\S\Delta_C^n\\
	=&\ \gamma_{A'}(\alpha\otimes X)^\S(\Delta_\C^k\circ1_A)F^\S\Delta_C^n\\
	=&\ \gamma_{A'}(\alpha\otimes X)^\S F^\S(\Delta_\C^k\circ1_C)\Delta_C^n\\
	=&\ \sum_{n_1+\cdots+n_k=n}\gamma_{A'}(\alpha\otimes X)^\S F^\S(1_\C\circ(\Delta_C^{n_1}\otimes\cdots\otimes\Delta_C^{n_k}))\Delta_C^k\\
	=&\ \sum_{S_1\sqcup\cdots\sqcup S_k=[n]}(-1)^{\epsilon_1}\gamma_{A'}(\alpha\otimes X)^\S(1_\C\circ(F^{S_1}\Delta_C^{n_1}\otimes\cdots\otimes F^{S_k}\Delta_C^{n_k}))\Delta_C^k\\
	=&\ \sum_{\substack{S_1\sqcup\cdots\sqcup S_k=[n]\\\sigma\in\S_k}}(-1)^{\epsilon_1+\epsilon_2}\gamma_{A'}(\alpha\circ1_{A'})(1_\C\circ(x_{\sigma(1)}F^{S_1}\Delta_C^{n_1}\otimes\cdots \otimes x_{\sigma(k)}F^{S_k}\Delta_C^{n_k}))\Delta_C^k
	\end{align*}
	where the fourth line follows from $(\Delta_\C\circ C)\Delta_C = (1_\C\circ\Delta_C)\Delta_C$, and in the passage from the fourth line to the fifth line we denoted $n_i\coloneqq|S_i|$. The Koszul signs are
	\[
	\epsilon_1 = \sum_{i=1}^k\sum_{s\in S_i}|f_s|\sum_{j<i}\sum_{\substack{p\in S_j\\p>s}}|f_p|\ ,
	\]
	obtained by shuffling the maps $f_i$, and $\epsilon_2$, which is similarly obtained by permuting the maps $x_i$ and interchanging them with the maps $f_j$.
	
	\medskip
	
	On the other hand, we have
	\begin{align*}
	\ell_k(\homa(1,X))(\mu_n^\vee\otimes F) =&\ \big(\gamma_{\homa(C,A')}(\iota\otimes\homa(1,X))^\S(\Delta_{\com^\vee}^k\otimes1_{\hom(C,A)})\big)(\mu_n^\vee\otimes F)\\
	=&\ \big(\gamma_{\homa(C,A')}(\iota\otimes\homa(1,X))^\S\big)\left(\sum_{\substack{S_1\sqcup\ldots\sqcup S_k = [n]}}(-1)^{\epsilon_1}\mu_k^\vee\otimes\bigotimes_{i=1}^k(\mu_{n_i}^\vee\otimes F^{S_i})\right)\\
	=&\ \gamma_{\homa(C,A')}\left(\sum_{\substack{S_1\sqcup\ldots\sqcup S_k = [n]\\\sigma\in\S_k}}(-1)^{\epsilon_1+\epsilon_2}s^{-1}\mu_k^\vee\otimes\bigotimes_{i=1}^k\homa(1,x_{\sigma(i)})(\mu_{n_i}^\vee\otimes F^{S_i})\right)\\
	=&\ \sum_{\substack{S_1\sqcup\ldots\sqcup S_k = [n]\\\sigma\in\S_k}}(-1)^{\epsilon_1+\epsilon_2}\gamma_{A'}\left(\alpha\otimes\bigotimes_{i=1}^k\homa(1,x_{\sigma(i)})(\mu_{n_i}^\vee\otimes F^{S_i})\right)\Delta_C^k\\
	=&\ \sum_{\substack{S_1\sqcup\ldots\sqcup S_k = [n]\\\sigma\in\S_k}}(-1)^{\epsilon_1+\epsilon_2}\gamma_{A'}\left(\alpha\otimes\bigotimes_{i=1}^kx_{\sigma(i)}F^{S_i}\Delta_C^{n_i}\right)\Delta_C^k\ .
	\end{align*}
	Notice that in the fourth line we do not need to sum over permutations when applying the structure map $\gamma_{\homa(C,A')}$, because the term
	\[
	\sum_{\substack{S_1\sqcup\ldots\sqcup S_k = [n]\\\sigma\in\S_k}}(-1)^{\epsilon_1+\epsilon_2}s^{-1}\mu_k^\vee\otimes\bigotimes_{i=1}^k\homa(1,x_{\sigma(i)})(\mu_{n_i}^\vee\otimes F^{S_i})
	\]
	in the third line naturally lives in invariants, not coinvariants.
	
	\medskip
	
	In conclusion, we have
	\[
	\homa(1,\ell_k(X)) = \ell_k(\homa(1,X))\ .
	\]
	We are left to prove that the morphism commutes with the differentials. In order to avoid cumbersome notation, we will denote simply by $d$ the differentials of both the $\SLoo$-algebras $\homa(\bara A,A')$ and $\homi(\bari\homa(C,A),\homa(C,A'))$. The letter $\partial$ denotes instead the differential of $\hom(C,A)$. Let $x\in\homa(\bara A,A')$, and let $f_1,\ldots,f_n\in\homa(C,A)$. On one hand, we have
	\begin{align*}
	d(x) =&\ d_{A'}x - (-1)^{|x|}x d_{\bara A}\\
	=&\ d_{A'}x - (-1)^{|x|}x d_{\C\circ A} - (-1)^{|x|}x (1_\C\circ(1_A;\gamma_A))((1_\C\circ_{(1)}\alpha)\circ1_A)(\Delta_{(1)}\circ1_A)\ ,
	\end{align*}
	and thus
	\begin{align*}
	\homa(1,d(x))(\mu_n^\vee\otimes F) =&\ d(x)F^\S\Delta_C^n\\
	=&\ d_{A'}x F^\S\Delta_C^n - (-1)^{|x|}x d_{\C\circ A}F^\S\Delta_C^n\tag{L1}\label{L1}\\
	&- (-1)^{|x|}x (1_\C\circ(1_A;\gamma_A))((1_\C\circ_{(1)}\alpha)\circ1_A)(\Delta_{(1)}\circ1_A)F^\S\Delta_C^n\ .\tag{L2}\label{L2}
	\end{align*}
	The second term in (\ref{L1}) equals
	\begin{align*}
	xd_{\C\circ A}F^\S\Delta_C^n =&\ x(d_\C\circ1_A)F^\S\Delta_C^n + x(1_\C\circ'd_A)F\Delta_C^n\\
	=&\ (-1)^{|F|}x F^\S(d_\C\circ1_C)\Delta_C^n + x\partial(F)^\S\Delta_C^n + (-1)^{|F|} xF(1_\C\circ'd_C)\Delta_C^n\\
	=&\ (-1)^{|F|}x F^\S d_{\C(C)}\Delta_C^n + x\partial(F)^\S\Delta_C^n\\
	=&\ (-1)^{|F|}x F^\S\Delta_C^nd_C + x\partial(F)^\S\Delta_C^n\ ,\tag{\text{T1}}\label{T1}
	\end{align*}	
	while the term of (\ref{L2}) gives
	\begin{align*}
	x(1_\C\circ(1_A;\gamma_A))&((1_\C\circ_{(1)}\alpha)\circ1_A)(\Delta_{(1)}\circ1_A)F^\S\Delta_C^n =\\
	=&\ x (1_\C\circ(1_A;\gamma_A))((1_\C\circ_{(1)}\alpha)\circ1_A)F^\S(\Delta_{(1)}\circ1_C)\Delta_C^n\\
	=&\ \sum_{S_1\sqcup S_2=[n]}(-1)^\epsilon x (1_\C\circ(1_A;\gamma_A))((1_\C\circ_{(1)}\alpha)\circ1_A)((F^{S_1}\Delta_C^{n_1})\otimes F^{S_2})\Delta_C^{n_2}\ ,\tag{\text{T2}}\label{T2}
	\end{align*}
	where in the third line we used \cref{lemma:technical lemma 2}. On the other hand,
	\begin{align*}
	d(\homa(1,x)) =&\ d_{\homa(C,A')}\homa(1,x) - (-1)^{|\homa(1,x)|}\homa(1,x)d_{\bari\homa(C,A)}\ .
	\end{align*}
	Notice that $(-1)^{|\homa(1,x)|} = (-1)^{|x|}$. We apply this to $\mu_n^\vee\otimes F$ and obtain
	\begin{align*}
	d(\homa(1,x))(\mu_n^\vee\otimes F) =&\ d_{A'}\homa(1,x) - (-1)^{|x|+|F|}\homa(1,x)(\mu_n^\vee\otimes F)d_C\\
	&-(-1)^{|x|}\homa(1,x)(d_{\bari\homa(C,A)}(\mu_n^\vee\otimes F))\ .
	\end{align*}
	The first term equals $d_{A'}x F\Delta_C^n$ and cancels with the first term of (\ref{L1}), and the second term equals the first term of (\ref{T1}). For the third term, we have
	\begin{align*}
	\homa(1,x)(d_{\bari\homa(C,A)}&(\mu_n^\vee\otimes F)) =\\
	=&\ \homa(1,x)(\mu_n^\vee\otimes \partial(F))\\
	&+ \homa(1,x)\left(\sum_{S_1\sqcup S_2=[n]}\mu_{n_2}^\vee\otimes(\ell_{n_1}(F^{S_1})\otimes F^{S_2})\right).\tag{T3}\label{T3}
	\end{align*}
	The first term of this expression cancels the second term of (\ref{T1}). Therefore, we are left to show that (\ref{T3}) equals (\ref{T2}). We have
	\begin{align*}
	\homa(1,x)&\left(\sum_{S_1\sqcup S_2=[n]}(-1)^\epsilon\mu_{n_2}^\vee\otimes(\ell_{n_1}(F^{S_1})\otimes F^{S_2})\right) =\\
	=&\ \sum_{S_1\sqcup S_2=[n]}(-1)^\epsilon x(\ell_{n_1}(F^{S_1})\otimes F^{S_2})\Delta_C^{n_2}\\
	=&\ \sum_{S_1\sqcup S_2=[n]}(-1)^\epsilon x((\gamma_A(\alpha\otimes F^{S_1})\Delta_C^{n_1})\otimes F^{S_2})\Delta_C^{n_2}\\
	=&\ \sum_{S_1\sqcup S_2=[n]}(-1)^\epsilon x(1_\C\circ(\gamma_A(\alpha\circ1_A)\otimes1_A^{\otimes (n_2-1)}))(1_\C\circ(F^{S_1}\Delta_C^{n_1}\otimes F^{S_2}))\Delta_C^{n_2}\\
	=&\ (\ref{T2})\ ,
	\end{align*}
	where $n_1 = |S_1|$, and $n_2 = |S_2|+1$. This concludes the proof.
\end{proof}

\subsection{Proof of \cref{thm:compositions are homotopic}}\label{subsect:proof of compositions are homotopic}

The strategy to prove that the two compositions are homotopic as $\infty$-morphisms of $\SLoo$-algebras, or equivalently gauge equivalent as Maurer--Cartan elements, is as follows. We know that when $\Psi$ would be a strict morphism, then the two compositions would commute. Therefore, we will rectify $\Psi$ to get a strict morphism $R(\Psi)$ of $\P$-algebras, which we do by applying the bar-cobar adjunction. However, to do this we need also to rectify the $\P$-algebras $A$ and $A'$. Fortunately, if $\alpha$ is Koszul, then the new $\P$-algebras thus obtained are quasi-isomorphic, and thus we are able to apply the Dolgushev--Rogers theorem \cite[Thm. 1.1]{dr15} to go back to the original two compositions and conclude the proof.

\begin{proof}[Proof of \cref{thm:compositions are homotopic}]
	Denote by $R(A)\coloneqq\cobara\bara A$ the bar-cobar resolution of $A$, and similarly for $A'$. Since $\alpha$ is Koszul, the counit of the adjunction
	\[
	\epsilon_A:R(A)\longrightarrow A
	\]
	is a quasi-isomorphism by \cite[Thm. 11.3.3]{lodayvallette}. The rectification of the $\infty_\alpha$-morphism $\Psi$ is given by the strict morphism
	\[
	R(\Psi):\cobara\bara A\xrightarrow{\cobara\Psi}\cobara\bara A'\ .
	\]
	The proof is outlined by the diagram in \cref{f:diagram}.
	\begin{figure*}[tbp]
		\begin{center}
			\begin{tikzpicture}
			\node (a) at (0,0){$\homa(C,A)$};
			\node (b) at (6,6){$\homa(C',A)$};
			\node (c) at (8,0){$\homa(C',R(A'))$};
			\node (d) at (4,0){$\homa(C,R(A))$};
			\node (e) at (6,3.5){$\homa(C',R(A))$};
			\node (f) at (6,-3.5){$\homa(C,R(A'))$};
			\node (g) at (6,-6){$\homa(C,A')$};
			\node (h) at (12,0){$\homa(C',A')$};
			
			\draw[->,line join=round,decorate,decoration={zigzag,segment length=4,amplitude=.9,post=lineto,post length=2pt}] (a) to [out = 90, in = -180] node[above,sloped]{$\homa(\Phi,1)$} (b);
			\draw[->,line join=round,decorate,decoration={zigzag,segment length=4,amplitude=.9,post=lineto,post length=2pt}] (a) to [out = -90, in = 180] node[below,sloped]{$\homa(1,\Psi)$} (g);
			\draw[<-] (a) -- node[above]{\scriptsize $\homa(1,\epsilon_A)$} node[below]{\scriptsize filtered qi} (d);
			\draw[<-] (b) -- node[above,sloped]{\scriptsize $\homa(1,\epsilon_A)$} node[below,sloped]{\scriptsize filtered qi} (e);
			\draw[<-] (g) -- node[above,sloped]{\scriptsize $\homa(1,\epsilon_{A'})$} node[below,sloped]{\scriptsize filtered qi} (f);
			\draw[->,line join=round,decorate,decoration={zigzag,segment length=4,amplitude=.9,post=lineto,post length=2pt}] (d) to [out = 90, in = -155] node[above,sloped]{\small $\homa(\Phi,1)$} (e);
			\draw[->,line join=round,decorate,decoration={zigzag,segment length=4,amplitude=.9,post=lineto,post length=2pt}] (f) to [out = 25, in = -90] node[below,sloped]{\small $\homa(\Phi,1)$} (c);
			\draw[->,line join=round,decorate,decoration={zigzag,segment length=4,amplitude=.9,post=lineto,post length=2pt}] (b) to [out = 0, in = 90] node[above,sloped]{$\homa(1,\Psi)$} (h);
			\draw[->,line join=round,decorate,decoration={zigzag,segment length=4,amplitude=.9,post=lineto,post length=2pt}] (g) to [out = 0, in = -90] node[below,sloped]{$\homa(\Phi,1)$} (h);
			\draw[->] (d) to [out = -90, in = 155] node[below,sloped]{\small $\homa(1,R(\Psi))$} (f);
			\draw[->] (e) to [out = -25, in = 90] node[above,sloped]{\small $\homa(1,R(\Psi))$} (c);
			\draw[<-] (h) -- node[above]{\scriptsize $\homa(1,\epsilon_{A'})$} node[below]{\scriptsize filtered qi} (c);
			\end{tikzpicture}
		\end{center}
		\caption{}
		\label{f:diagram}
	\end{figure*}
	The innermost square is commutative since $R(\Psi)$ is a strict morphism of $\P$-algebras, and the maps passing from the outer rim to the inner one are filtered quasi-isomorphisms. Notice that all squares are commutative, except for the outer one, which fails to be commutative at $\homa(C,A)$.
	
	\medskip
	
	Now consider the morphism of $\SLoo$-algebras
	\[
	\homi(\bari\homa(C,A),\homa(C',A'))\xrightarrow{\homi(\bari\homa(1,\epsilon_A),1)}\homi(\bari\homa(C,R(A)),\homa(C',A'))\ .
	\]
	It is a filtered quasi-isomorphism, and it is given on Maurer--Cartan elements by precomposition with $\homa(1,\epsilon_A)$. The two compositions
	\[
	\homa(\Phi,1)\homa(1,\Psi)\quad\text{and}\quad\homa(1,\Psi)\homa(\Phi,1)
	\]
	are naturally elements of $\homi(\bari\homa(C,A),\homa(C',A'))$ and are mapped to the same elements, and thus, by the Dolgushev--Rogers theorem, they are homotopic.
\end{proof}

\begin{remark}
	The proof above supposes that we are filtering our convolution algebras with the filtration induced by a filtration on the $\C$-coalgebras --- usually the coradical filtration. If one filters them by a filtration induced by filtrations on the $\P$-algebras, then the exact same proof goes through with the sole difference that one has to rectify the $\infty_\alpha$-morphism $\Phi$ instead of $\Psi$.
\end{remark}

\appendix

\section{A counterexample}\label{appendix:counterexample}

The goal of this appendix is to give an explicit counterexample to the conclusion of \cref{thm:compositions are homotopic} in the case when the twisting morphism $\alpha$ is not Koszul.

\medskip

More precisely, we will show that there exists a (non-Koszul) twisting morphism $\alpha:\C\to\P$, two $\C$-coalgebras $C',C$, an $\infty_\alpha$-morphism $\Phi:C'\rightsquigarrow C$ of $\C$-coalgebras, two $\P$-algebras $A,A'$, and an $\infty_\alpha$-morphism $\Psi:A\rightsquigarrow A'$ which are such that the two composites
\[
\homa(\Phi,1)\homa(1,\Psi)\quad\text{and}\quad\homa(1,\Psi)\homa(\Phi,1)
\]
are not homotopic.

\medskip

For simplicity, we will work in the non-symmetric setting. It is straightforward to construct a version of the example we present in the symmetric setting.

\medskip

We take $\alpha:\as^\vee\to\as$ to be the zero twisting morphism $\alpha = 0$. This greatly simplifies the situation, because $\infty_0$-morphisms are very simple.
\begin{enumerate}
	\item If $A$ is an associative algebra, then $\bar_0A = \as^\vee(A)$ with the differential $d_{\as^\vee(A)} = 1_{\as^\vee}\circ'd_A$ induced only by the differential of $A$. Therefore, an $\infty_0$-morphism of associative algebras $A\rightsquigarrow A'$ is just a chain map $\as^\vee(A)\to A'$.
	\item Dually, an $\infty_0$-morphism of coassociative coalgebras $C'\rightsquigarrow C$ is nothing else than a chain map $C'\to\as(C)$.
\end{enumerate}
Moreover, we will take our (co)algebras to be concentrated in degree $0$, and thus having trivial differential. Thus, we end up working with linear maps between vector spaces, and two such maps are homotopic if, and only if, they are equal.

\medskip

The coassociative coalgebra $C'$ will be
\[
C'\coloneqq\k x\qquad\text{with trivial coproduct, i.e.}\qquad\Delta_{C'}(x)\coloneqq\id\otimes x\in\as^\vee(C')\ .
\]
For the coassociative coalgebra $C$ we take
\[
C\coloneqq\as^\vee(\k y)\ ,
\]
the cofree coassociative coalgebra over a $1$-dimensional vector space. The $\infty_0$-morphism $\Phi:C'\rightsquigarrow C$ is given by the linear map
\[
\Phi:C'\longrightarrow\as(C)
\]
defined by
\[
\Phi(x)\coloneqq \mu_2\otimes\big((\mu_2^\vee\otimes y\otimes y)\otimes(\id\otimes y)\big)\ ,
\]
where $\mu_n\in\as(n)$ is the operation corresponding to the multiplication of $n$ elements in an associative algebra.

\medskip

For the algebras, we set $A$ to be
\[
A\coloneqq\as(\k z)\ ,
\]
the free associative algebra on one generator, and $A'$ to be
\[
A'\coloneqq\k w\qquad\text{with}\qquad\mu_2(w,w)\coloneqq w\ .
\]
The $\infty_0$-morphism $\Psi:A\rightsquigarrow A'$ is given by any linear map
\[
\Psi:\as^\vee(A)\longrightarrow A'
\]
satisfying
\[
\Psi(\id\otimes(\id\otimes z)) = w\qquad\text{and}\qquad\Psi\left(\mu_2^\vee\otimes\big((\id\otimes z)\otimes(\id\otimes z)\big)\right) = w\ .
\]
For example, one can define $\Psi$ by the conditions above and setting it to be zero everywhere else.

\medskip

Finally, we take $f\in\hom^0(C,A)$ to be any linear map such that $f(\id\otimes y) = \id\otimes z$. For example, one can simply set $f$ to be $0$ on all other elements. We will consider the action of the two compositions on the element $\mu_3^\vee\otimes F\coloneqq\mu_3^\vee\otimes f\otimes f\otimes f$, and then apply the resulting map to $x\in C'$. We refer to \cite[Sect. 6.1]{rnw17} for a diagrammatic description of the two composites. In formul{\ae}, we have that
\begin{align*}
	\hom^0(1,\Psi)&\hom^0(\Phi,1)(\mu_3^\vee\otimes F)(x) = \Psi\as^\vee(\gamma_A)F^\S\proj_3\as^\vee(\Phi)\Delta_{C'}(x)\\
	=&\ \Psi\as^\vee(\gamma_A)F^\S\proj_3\as^\vee(\Phi)(\id\otimes x)\\
	=&\ \Psi\as^\vee(\gamma_A)F^\S\proj_3\left(\id\otimes\left(\mu_2\otimes\big((\mu_2^\vee\otimes y\otimes y)\otimes(\id\otimes y)\big)\right)\right)\\
	= 0\ ,
\end{align*}
since the element in the second to last line lives in $(\as^\vee\circ\as)(2)\otimes C^{\otimes 2}$. At the same time, the other composition gives
\begin{align*}
	\hom^0(\Phi,1)&\hom^0(1,\Psi)(\mu_3^\vee\otimes F)(x) = \gamma_{A'}\as(\Psi)F^\S\proj_3\as(\Delta_C)\Phi(x)\\
	=&\ \gamma_{A'}\as(\Psi)F^\S\proj_3\as(\Delta_C)\left(\mu_2\otimes\big((\mu_2^\vee\otimes y\otimes y)\otimes(\id\otimes y)\big)\right)\\
	=&\ \gamma_{A'}\as(\Psi)F^\S\proj_3\Big(\mu_2\otimes\left(\big((\id\otimes(\mu_2^\vee\otimes y\otimes y))\otimes(\id\otimes(\id\otimes y))\right)\big) +\\
	&\quad\qquad\qquad\qquad\qquad + \big((\mu_2^\vee\otimes((\id\otimes y)\otimes(\id\otimes y)))\otimes(\id\otimes(\id\otimes y))\big)\Big)\\
	=&\ \gamma_{A'}\as(\Psi)F^\S\left(\mu_2\otimes\big((\mu_2^\vee\otimes((\id\otimes y)\otimes(\id\otimes y)))\otimes(\id\otimes(\id\otimes y))\big)\right)\\
	=&\ \gamma_{A'}\as(\Psi)\left(\mu_2\otimes\big((\mu_2^\vee\otimes((\id\otimes z)\otimes(\id\otimes z)))\otimes(\id\otimes(\id\otimes z))\big)\right)\\
	=&\ \gamma_{A'}\left(\mu_2\otimes w\otimes w\big)\right)\\
	=&\ w\ .
\end{align*}
Thus, the compositions are not equal, and in particular they are not homotopic.

\bibliographystyle{alpha}
\bibliography{Convolution_algebras_and_the_deformation_theory_of_infinity_morphisms}

\begin{thebibliography}{RNW17}

\bibitem[Ban17]{ban17}
R.~Bandiera.
\newblock Descent of {D}eligne--{G}etzler $\infty$-groupoids.
\newblock 2017.
\newblock \href{https://arxiv.org/abs/1705.02880}{arXiv:1705.02880}.

\bibitem[BM03]{bm03}
C.~Berger and I.~Moerdijk.
\newblock Axiomatic homotopy theory for operads.
\newblock {\em Commentarii Mathematici Helvetici}, 78(4):805--831, 2003.
\newblock \href{https://arxiv.org/abs/math/0206094}{arXiv:math/0206094}.

\bibitem[DCH16]{dch16}
G.~C. Drummond-Cole and J.~Hirsh.
\newblock Model structures for coalgebras.
\newblock {\em Proceedings of the AMS}, 144(4):1467--1481, 2016.
\newblock \href{https://arxiv.org/abs/1411.5526}{arXiv:1411.5526}.

\bibitem[DHR15]{dhr15}
V.~A. Dolgushev, A.~E. Hoffnung, and C.~L. Rogers.
\newblock What do homotopy algebras form?
\newblock {\em Advances in Mathematics}, 274:562--605, 2015.
\newblock \href{https://arxiv.org/abs/1406.1751}{arXiv:1406.1751}.

\bibitem[Dol07]{dol07}
V.~A. Dolgushev.
\newblock Erratum to: ``{A} proof of {T}sygan's formality conjecture for an
  arbitrary smooth manifold''.
\newblock 2007.
\newblock \href{https://arxiv.org/abs/math/0703113}{arXiv:math/0703113}.

\bibitem[DP16]{dp16}
V.~Dotsenko and N.~Poncin.
\newblock A tale of three homotopies.
\newblock {\em Applied Categorical Structures}, 24(6):845--873, 2016.

\bibitem[DR15]{dr15}
V.~A. Dolgushev and C.~L. Rogers.
\newblock A version of the {G}oldman-{M}illson theorem for filtered
  {$L_\infty$}-algebras.
\newblock {\em Journal of Algebra}, 430:260--302, 2015.
\newblock \href{https://arxiv.org/abs/1407.6735}{arXiv:1407.6735}.

\bibitem[Dup76]{dup76}
J.~L. Dupont.
\newblock Simplicial de {R}ham cohomology and characteristic classes of flat
  bundles.
\newblock {\em Topology}, 15:233--245, 1976.

\bibitem[Get09]{get09}
E.~Getzler.
\newblock Lie theory for nilpotent {$\L_\infty$}-algebras.
\newblock {\em Annals of Mathematics}, 170(1):271--301, 2009.
\newblock \href{https://arxiv.org/abs/math/0404003}{arXiv:math/0404003}.

\bibitem[Hin97]{Hin97homological}
V.~Hinich.
\newblock Homological algebra of homotopy algebras.
\newblock {\em Communications in Algebra}, 25(10):3291--3323, 1997.
\newblock \href{https://arxiv.org/abs/q-alg/9702015}{arXiv:q-alg/9702015}.

\bibitem[Hov99]{Hovey}
M.~Hovey.
\newblock {\em Model {C}ategories}, volume~63 of {\em Mathematical Surveys and
  Monographs}.
\newblock AMS, 1999.

\bibitem[Kon17]{kontsevich17}
M.~Kontsevich.
\newblock Derived {G}rothendieck--{T}eichm{\"u}ller group and graph complexes
  (after {T}. {W}illwacher).
\newblock {\em Report of the {S}{\'e}minaire {N}icolas {B}ourbaki},
  2016--2017(1126), 2017.

\bibitem[LG16]{legrignou16}
B.~Le~Grignou.
\newblock Homotopy theory of unital algebras.
\newblock 2016.
\newblock \href{https://arxiv.org/abs/1612.02254}{arXiv:1612.02254}.

\bibitem[LV12]{lodayvallette}
J.~L. Loday and B.~Vallette.
\newblock {\em Algebraic Operads}, volume 346 of {\em Grundlehren der
  {M}athematischen {W}issenschaften}.
\newblock Springer Verlag, 2012.

\bibitem[RN17]{rn17cosimplicial}
D.~Robert-Nicoud.
\newblock Representing the {D}eligne--{H}inich--{G}etzler $\infty$-groupoid.
\newblock 2017.
\newblock \href{https://arxiv.org/abs/1702.02529}{arXiv:1702.02529}.

\bibitem[RN18a]{rn17}
D.~Robert-Nicoud.
\newblock Deformation theory with homotopy algebra structures on tensor
  products.
\newblock {\em Documenta Mathematica}, 23:189--240, 2018.
\newblock \href{https://arxiv.org/abs/1702.02194}{arXiv:1702.02194}.

\bibitem[RN18b]{rn18}
D.~Robert-Nicoud.
\newblock A model structure for the {G}oldman--{M}illson theorem.
\newblock {\em Graduate Journal of Mathematics}, 3(1):15--30, 2018.
\newblock \href{https://arxiv.org/abs/1803.03144}{arXiv:1803.03144}.

\bibitem[RNW17]{rnw17}
D.~Robert-Nicoud and F.~Wierstra.
\newblock Homotopy morphisms between convolution homotopy {L}ie algebras.
\newblock 2017.
\newblock \href{https://arxiv.org/abs/1712.00794}{arXiv:1712.00794}.

\bibitem[Val14]{val14}
B.~Vallette.
\newblock Homotopy theory of homotopy algebras.
\newblock 2014.
\newblock \href{https://arxiv.org/abs/1411.5533}{arXiv:1411.5533}.

\bibitem[Wie16]{wie16}
F.~Wierstra.
\newblock Algebraic {H}opf invariants and rational models for mapping spaces.
\newblock 2016.
\newblock \href{https://arxiv.org/abs/1612.07762}{arXiv:1612.07762}.

\end{thebibliography}

\end{document}